\title{Dynamical behavior of a nondiffusive scheme for the advection equation}
\author{Nina Aguillon}
\email{nina.aguillon@sorbonne-universite.fr}
\address{Sorbonne-Université, CNRS, Universit{\'e} de Paris, INRIA, Laboratoire Jacques-Louis Lions (LJLL), {\'e}quipe ANGE, F-75005 Paris, France}
\author{Pierre-Antoine Guih\'eneuf}
\email{ pierre-antoine.guiheneuf@imj-prg.fr}
\address[Pierre-Antoine Guih{\'e}neuf]{Sorbonne Universit{\'e}, Universit{\'e} Paris Diderot, CNRS, Institut de Math{\'e}matiques de Jussieu-Paris Rive Gauche, IMJ-PRG, F-75005, Paris, France}
\newtheorem{lemme}{Lemma}
\newtheorem{theoreme}[lemme]{Theorem}
\newtheorem{prop}[lemme]{Proposition}
\newtheorem{conj}[lemme]{Conjecture}
\newtheorem{theo}{Theorem}
\theoremstyle{definition}
\newtheorem{definition}[lemme]{Definition}
\theoremstyle{remark}
\newtheorem{rem}[lemme]{Remark}
\newcommand{\eps}{\varepsilon}
\newcommand{\N}{\mathbf{N}}
\newcommand{\R}{\mathbf{R}}
\newcommand{\Z}{\mathbf{Z}}
\newcommand{\varep}{\varepsilon}
\renewcommand{\mod}{\, \mathop{mod} \,}
\definecolor{mygreen}{HTML}{009000}
\definecolor{mypurple}{HTML}{900090}
\begin{document}

\maketitle

\begin{abstract}
We study the long time behaviour of a dynamical system strongly linked to the anti-diffusive scheme of Despr\'es and Lagoutiere for the $1$-dimensional transport equation. This scheme is nondiffusive in the sens that discontinuities are not smoothened out through time. Numerical simulations indicates that the scheme error's is uniformly bounded with time. We prove that this scheme is overcompressive when the Courant--Friedrichs--Levy number is $1/2$: when the initial data is nondecreasing, the approximate solution becomes a Heaviside function. In a special case, we also understand how plateaus are formed in the solution and their stability, a distinctive feature of the Despr\'es and Lagoutiere scheme.
\end{abstract}

\setcounter{tocdepth}{1}
\tableofcontents

\section*{Introduction}

The numerical approximation of the solution to the $1$-dimensionnal transport equation with a constant velocity $V>0$ has received a lot of attention for a long time, and still continues to do. One of the reason is that this equation, namely
\begin{equation} \label{eq:transport}
  \begin{cases}
   \partial_t u (t,x) + V \partial_x u(t,x)= 0 & \forall t>0, \ \forall x \in \R \\
   u(0,x)=u^0(x) & \forall x \in \R
  \end{cases}
   \end{equation} 
is very simple and well understood, and is at the same time a fundamental example in the much larger class of conservation laws. In the multidimensionnal setting with space and time dependent velocity fields,~\eqref{eq:transport} is of crucial importance for practical applications, as it represents the passive advection of the quantity $u$.
For this class of equation it is of crucial importance to have reliable and accurate numerical schemes, able to capture the exact solution $u(t,x)=u^0(x-Vt)$ of~\eqref{eq:transport}, even when $u^0$ is discontinuous.

One of the simplest schemes to approximate~\eqref{eq:transport} is the so-called upwind scheme. Fixed time step $\Delta t>0$ and space step $\Delta x>0$ are given, and the real line $\R$ is separated in intervals of size $\Delta x$, with midpoints $x_i=i \Delta x$, and left extremity $x_{i-1/2}=x_i - \frac{\Delta x}{2}$. The scheme is initialized with $u_j^0=u^0(x_j)$ if $u^0$ is $\mathcal{C}^1$-regular or with
\begin{equation} \label{eq:ini}
 u_j^0= \frac{1}{\Delta x} \int_{x_{i-1/2}}^{x_{i+1/2}} u^0(x) dx
\end{equation}
when $u^0$ only  has bounded variations.
The approximate solution $(u_j^{n+1})_{j \in \Z}$ at time $(n+1)\Delta t$ is obtained from $(u_j^n)_{j \in \Z}$, the approximate solution at time $n \Delta t$, by
\begin{equation} \label{upwind}
 \frac{u_j^{n+1}-u_j^n}{\Delta t} + V \frac{u_j^n-u_{j-1}^n}{\Delta x} = 0, \qquad \forall j \in \Z, \ \forall n \in \N.
\end{equation}
An interpretation is the following. At time $n \Delta t$, define a piecewise constant function by
\[u_{\Delta x}^n(x) = u_j^n \ \text{ if } x \in \left[x_{j-1/2},  x_{j+1/2}\right). \]
Translate it to the right of a distance $V \Delta t$, i.e. consider $v$ the exact solution of~\eqref{eq:transport} with initial data $u_{\Delta x}^n$ at time $\Delta t$. 
If the solution does not cross more than a cell, namely if $V \Delta t < \Delta x$, then
\[ \begin{aligned}
    \int_{x_{i-1/2}}^{x_{i+1/2}} v(x) dx &= \int_{x_{i-1/2}}^{x_{i+1/2}} u_{\Delta x}^n( x- V \Delta t) dx \\
    &=  V \Delta t u_{j-1}^n + (\Delta x - V \Delta t) u_j^n = \Delta x u_j^{n+1}. 
   \end{aligned}
\]
It is possible to prove that the resulting scheme converges towards the exact solution.
\begin{theo} \label{thmconv}
Suppose that the Courant--Friedrichs--Levy number $\frac{V \Delta t}{\Delta x}$ is fixed in the interval $(0,1)$.
\begin{itemize}
 \item If $u^0$ if $\mathcal{C}^2$-regular and with the initialization $u_j^0=u^0(x_j)$, there exists a constant $C$ such that
  \[ \max_{j \in \Z} \left| u_j^n -   u(n \Delta t, x_j)  \right| \leq C n \Delta t \Delta x. \]
  \item If $u^0$ has total variation and with the initialization~\eqref{eq:ini}, there exists a constant $C$ such that
  \[\forall n \in \N, \qquad  \Delta x \sum_{j \in \Z} \left| u_j^n -  \frac{1}{\Delta x}  \int_{x_i-\Delta x /2}^{x_i+\Delta x /2} u(n \Delta t, x) dx  \right| \leq C  \sqrt{n \Delta t \Delta x}. \]
\end{itemize}
\end{theo}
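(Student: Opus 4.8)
The plan is to treat the two parts by the classical pairing of consistency and stability: the regular case through a pointwise truncation analysis, and the bounded variation case through an explicit combinatorial representation of the scheme. Throughout write $\nu = V\Delta t/\Delta x \in (0,1)$, $t_n = n\Delta t$ and $x_j = j\Delta x$, so that the update reads $u_j^{n+1} = (1-\nu)u_j^n + \nu u_{j-1}^n$.

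For the $\mathcal{C}^2$ case, the first observation is that since $0<\nu<1$ the update is a convex combination, hence the one-step operator is non-expansive for the sup norm: $\max_j|v_j^{n+1}-w_j^{n+1}| \le \max_j|v_j^n - w_j^n|$ for any two numerical solutions $v,w$. I would then insert the exact solution $u(t,x)=u^0(x-Vt)$ into the scheme and Taylor-expand at $(t_n,x_j)$. Using $\partial_t u + V\partial_x u = 0$ together with its consequence $\partial_{tt}u = V^2\partial_{xx}u$, the local truncation error is
\[ \tau_j^n = \frac{V}{2}\big(V\Delta t - \Delta x\big)\,\partial_{xx}u(t_n,x_j) + O(\Delta x^2), \]
so that $|\tau_j^n|\le C\Delta x$ with $C$ controlled by $\|u^0{}''\|_\infty$, $V$ and $\nu$. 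The error $e_j^n = u_j^n - u(t_n,x_j)$ vanishes at $n=0$ and obeys $e_j^{n+1} = (1-\nu)e_j^n + \nu e_{j-1}^n - \Delta t\,\tau_j^n$; taking the sup over $j$ and using non-expansiveness gives $\|e^{n+1}\|_\infty \le \|e^n\|_\infty + C\Delta t\Delta x$, and an immediate induction yields the announced bound $Cn\Delta t\Delta x$.

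For the bounded variation case I would abandon pointwise estimates and exploit that the scheme is linear with explicit weights. Iterating the convex combination gives
\[ u_j^n = \sum_{k=0}^n \binom{n}{k}\nu^k(1-\nu)^{n-k}\,u_{j-k}^0, \]
that is, $u^n$ is the convolution of the initial cell averages with the law $\pi_n$ of a binomial variable $S_n \sim \mathcal{B}(n,\nu)$, of mean $n\nu$ and variance $n\nu(1-\nu)$. On the other hand $Vn\Delta t = n\nu\,\Delta x$, so the exact cell averages $\bar u_j^n = \frac{1}{\Delta x}\int_{x_{j-1/2}}^{x_{j+1/2}} u^0(x - Vn\Delta t)\,\mathrm dx$ are, up to a bounded fractional-cell correction, the translate of the initial cell averages by $n\nu$ cells. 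The comparison then rests on the elementary translation estimate $\sum_j |u_{j-k}^0 - u_{j-m}^0| \le |k-m|\sum_j|u_{j+1}^0-u_j^0| \le |k-m|\,\mathrm{TV}(u^0)$, valid because cell averaging does not increase the total variation. Summing against the weights $\pi_n$ and multiplying by $\Delta x$ gives
\[ \Delta x\sum_{j}\big|u_j^n-\bar u_j^n\big| \ \le\ \Delta x\,\mathrm{TV}(u^0)\sum_{k}\pi_n(k)\,|k-n\nu| \ \le\ \Delta x\,\mathrm{TV}(u^0)\,\sqrt{n\nu(1-\nu)}, \]
the last step being Cauchy--Schwarz, which bounds the mean absolute deviation of $S_n$ by its standard deviation. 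Since $\Delta t = \lambda\Delta x$ with $\lambda$ fixed, $\Delta x\sqrt n$ is comparable to $\sqrt{n\Delta t\Delta x}$, the claimed rate.

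The routine work (Taylor remainders, the mass-conservation/summation-by-parts justification of the translation estimate, and the decay of $u_j^0$ at infinity needed to make all sums absolutely convergent) I would relegate to the details. The genuine obstacle is the second part: first, making rigorous the passage from ``cell average of the exact translate'' to ``convolution of initial cell averages'', i.e.\ controlling the fractional part of $n\nu$ and the averaging operator by a term of order $\Delta x$, which is then dominated by the $\sqrt n$ spreading; and second, recognizing that the $\sqrt n$ growth is intrinsic, coming from the $\Theta(\sqrt n)$-cell spreading of the binomial weights. This spreading is precisely the numerical diffusion of the upwind scheme, in sharp contrast with the nondiffusive behaviour of the Despr\'es and Lagouti\`ere scheme analysed in the rest of the paper.
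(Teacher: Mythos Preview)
The paper does not give its own proof of this theorem: it is stated as background on the upwind scheme and immediately attributed to the literature (the paper writes that such estimates ``have been proven in~[D1] and~[D2]'' and that Theorem~\ref{thmconv} ``is a special case of this result''). So there is no in-paper argument to compare against.

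Your proposal is the standard textbook route and is sound. Part one is the usual Lax--Richtmyer pairing: the convex-combination structure gives $\ell^\infty$ non-expansiveness, your truncation computation is correct (the leading term $\tfrac{V}{2}(V\Delta t-\Delta x)\partial_{xx}u$ is the well-known numerical diffusion coefficient of upwind), and the induction closes as written. Part two is the probabilistic/Kuznetsov-flavoured argument: the binomial representation of the iterated scheme is exact, the translation bound $\Delta x\sum_j|u^0_{j-k}-u^0_{j-m}|\le |k-m|\,\Delta x\,\mathrm{TV}(u^0_\cdot)\le |k-m|\,\mathrm{TV}(u^0)$ is correct since cell averaging does not increase total variation, and Cauchy--Schwarz on $\mathbf E|S_n-n\nu|$ gives the $\sqrt{n}$. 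The one point you flag as a ``genuine obstacle'' --- replacing the exact translated cell averages by a discrete shift of the initial averages --- is in fact routine: writing $n\nu=m+\theta$ with $m\in\Z$ and $\theta\in[0,1)$, one has $\Delta x\sum_j|\bar u_j^n-u^0_{j-m}|\le \theta\,\Delta x\,\mathrm{TV}(u^0)$, which is absorbed into the $\sqrt{n\Delta t\Delta x}$ term. So the proposal is correct and complete up to the bookkeeping you already identified; it simply goes further than the paper, which only quotes the result.
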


This theorem means that \emph{if the final time $T= n \Delta t$ is fixed} and if $\Delta t$ and $\Delta x$ both tend to zero by keeping the ratio $\frac{V \Delta t}{ \Delta x}$ fixed and smaller than $1$, then the approximate solution converges towards the exact solution at rate $1$ or~$1/2$ (depending on the regularity of $u^0$), for the $L^\infty$ and $L^1$ norm respectively. The error in time grows as $T$ or $\sqrt{T}$.

A considerable effort has been made over the last decades to improve the rate of convergence.  For linear schemes, estimates like
\[ ||u_{\Delta x} - u ||_{L^\infty} \leq C(T) \Delta x^p \quad \text{or } \quad ||u_{\Delta x} - u ||_{L^1}  \leq C(T) \Delta x^{\frac{p}{p+1}} \]
have been proven in~\cite{D1} and~\cite{D2}, for regular and BV initial data respectively; Theorem~\ref{thmconv} is a special case of this result. Nonlinear schemes for~\eqref{eq:transport} are widely used, because it is the only way to obtain methods that are of order larger than $2$ and that verify a discrete maximum principle. For a description of the most popular methods for the linear advection equation, see~\cite{Leveque}.

\subsection*{The antidiffusive scheme of Despr\'es and Lagouti\`ere} \label{SDL}

Among all the schemes available for~\eqref{eq:transport}, the scheme introduced by Despr\'es and Lagouti\`ere in~\cite{DL02}  has the property of having an error that does not grow with time. This property has been verified numerically but is still a conjecture, and this paper is a step toward its proof.

Their main idea is to reverse the average step of the upwind scheme~\eqref{upwind}, by considering that each value $u_j^n$ comes from an average of a discontinuity joining $u_{j-1}^n$ to $u_{j+1}^n$ located somewhere inside the cell (recall that at time $n \Delta t$, the approximate solution is constant equal to $u_j^n$ on the interval $[x_{i-1/2},x_{i+1/2})$). This scheme can be decomposed in three steps:
\begin{enumerate}
 \item  In $[x_{i-1/2},x_{i+1/2})$, replace $u_j^n$  by a piecewise constant map of the form
\[ (u_{rec}^n)_{|[x_{i-1/2},x_{i+1/2})} : x\longmapsto \left\{\begin{array}{l}
u_{j-1}^n \quad \text{if}\quad  x_{i-1/2} \le x < x_{i-1/2} + d_j^n\\
u_{j+1}^n \quad \text{if}\quad x_{i-1/2} + d_j^n \le x < x_{i+1/2}
\end{array}\right.\]
The discontinuity is placed at a distance $d_j^n \in [0, \Delta x]$ of the left extremity of the cell, in such a way that the total mass inside the cell is preserved, i.e. 
 \[ \Delta x\, u_j^n= d_j^n u_{j-1}^n + (\Delta x-d_j^n) u_{j+1}^n. \]
  If this is not possible, do nothing, i.e. $(u_{rec}^n)_{|[x_{i-1/2},x_{i+1/2})} = u_j^n$.
 \item Compute the exact solution of~\eqref{eq:transport} with initial data $u^n_{rec}$ at time $\Delta t$, which is nothing but $x \mapsto u^n_{rec}(x-V \Delta t)$.
 \item Define $u_j^{n+1}$ as the average of this exact solution on $[x_{j-1/2}, x_{j+1/2}]$:
 \[ u_j^{n+1}= \dfrac{1}{\Delta x} \int_{x_{j-1/2}}^{x_{j+1/2}} u_{rec}^n(x- V \Delta t) dx. \]
\end{enumerate}
This process is illustrated on Figure~\ref{F:RecScheme}, where regions of same colors are of equal areas.
This interpretation in terms of discontinuous reconstruction is equivalent to the original presentation of~\cite{DL02} and is presented in~\cite{BCLL08}. The scheme is initialized with~\eqref{eq:ini}.

\begin{figure}[h!tp]
\begin{psfrags}
\psfrag{uj}{$\textcolor{mygreen}{u_j^n}$}
\psfrag{uj-1}{$\textcolor{mygreen}{u_{j-1}^n}$}
\psfrag{uj+1}{$\textcolor{mygreen}{u_{j+1}^n}$}
\psfrag{ujl}{}
\psfrag{ujr}{}
\psfrag{unrec}{$\textcolor{mypurple}{u_{rec}^n}$}
\psfrag{ujb}{$\textcolor{mygreen}{u_j^n}$}
\psfrag{uj-1b}{$\textcolor{mygreen}{u_{j-1}^{n+1}}$}
\psfrag{uj+1b}{$\textcolor{mygreen}{u_{j+1}^{n+1}}$}
\psfrag{ujb}{$\textcolor{mygreen}{u_{j}^{n+1}}$}
\psfrag{unrecb}{$\textcolor{mypurple}{u_{rec}^n}$}
\psfrag{d}{$d_j^n$}
\psfrag{CFL}{$V \Delta t$}
\psfrag{xj-1/2}{$x_{j-1/2}$}
\psfrag{xj+1/2}{$x_{j+1/2}$}
 \includegraphics[width=\linewidth]{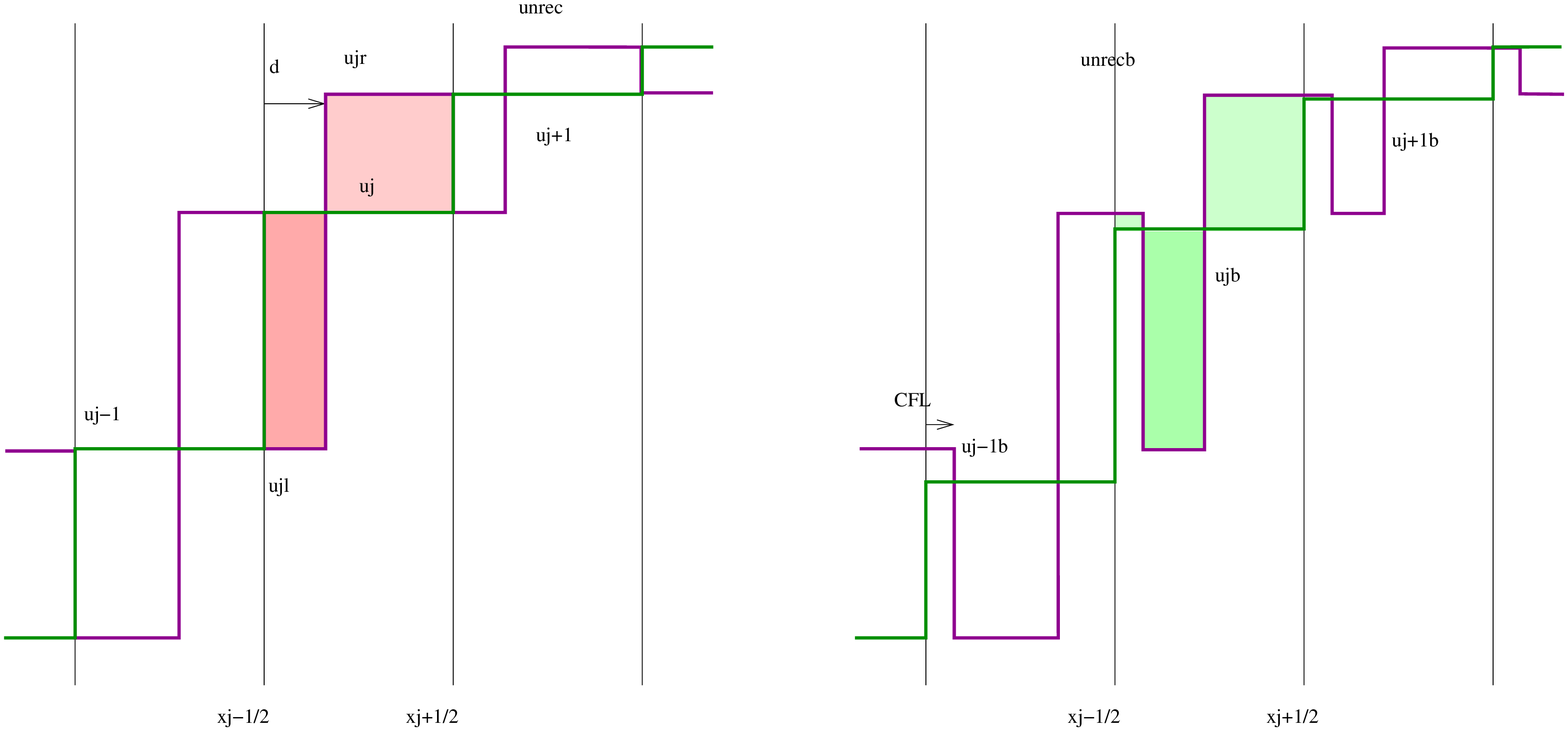} 
 \caption{Left: initial data $(u_j^{n})_j$ (green) and reconstruction $u^n_{rec}$ (purple). Right: the solution at time $n+1$ (green) is the $L^2$-projection of the reconstruction translated of $V \Delta t$ (purple).}  \label{F:RecScheme}
\end{psfrags}
\end{figure}

The property that makes this scheme unique is that it is exact for a large class of initial data (the vast majority of schemes are exact only for constant initial data).

\begin{prop}[Despr\'es, Lagouti\`ere, Theorem 3 of~\cite{DL02}] \label{attractor}
 Suppose that $u^0$ is piecewise constant, with plateaus of width larger than $3 \Delta x$. Then
 \[ \forall n \in \N, \ \forall j \in \Z, \qquad u_{j}^{n}= \int_{x_{j-1/2}}^{x_{j+1/2}} u^0(x-V n \Delta t) dx .\]
\end{prop}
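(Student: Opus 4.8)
The plan is to argue by induction on $n$, establishing the slightly stronger bookkeeping statement that $(u_j^n)_j$ coincides with the cell-averages of the exact solution, together with a description of the shape of this sequence near each jump. Write $w^n(x) := u^0(x - Vn\Delta t)$ for the exact solution at time $n\Delta t$; it is piecewise constant with the same plateaus as $u^0$, merely translated, so all its plateaus still have width $>3\Delta x$. Take as induction hypothesis $H(n)$ the statement that $u_j^n = \frac{1}{\Delta x}\int_{x_{j-1/2}}^{x_{j+1/2}} w^n(x)\,\mathrm{d}x$ for every $j$. The base case $H(0)$ is exactly the initialization~\eqref{eq:ini}, and the task is to pass from $H(n)$ to $H(n+1)$ by running the three steps of the scheme once.

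The first and most delicate step is purely combinatorial: reading off the shape of $(u_j^n)_j$ from $H(n)$. Since $w^n$ is constant on each plateau, a cell contained in the interior of a plateau has $u_j^n$ equal to the plateau value (a \emph{pure} cell), while a cell straddling a single jump from a left value $a$ to a right value $b$ located at abscissa $y\in[x_{j-1/2},x_{j+1/2})$ has $u_j^n=\lambda a+(1-\lambda)b$ with $\lambda=(y-x_{j-1/2})/\Delta x$ (a \emph{transition} cell). The key elementary estimate is that a plateau of width $>3\Delta x$ always contains at least two pure cells: the admissible left endpoints $x_{k-1/2}$ of the fully contained cells fill a closed interval of length $>2\Delta x$, which necessarily carries at least two grid points. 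As two consecutive jumps are separated by such a plateau, I conclude that transition cells are isolated, that each of them has a pure cell as its left and as its right neighbour, and that each pure cell has at least one pure neighbour carrying the same value.

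With this picture in hand, the second step checks that the reconstruction of Step~1 recovers $w^n$ exactly. For a transition cell $j$ the neighbouring values are genuinely $u_{j-1}^n=a$ and $u_{j+1}^n=b$, so the mass-conservation equation $\Delta x\,u_j^n=d_j^n a+(\Delta x-d_j^n)b$ admits the solution $d_j^n=y-x_{j-1/2}\in[0,\Delta x)$; hence $u_{rec}^n$ places the jump precisely at $y$ and coincides with $w^n$ on that cell. For a pure cell of value $c$, the same equation, together with the fact that at least one neighbour equals $c$, forces $d=0$ or $d=\Delta x$ and returns the constant $c$. Thus $u_{rec}^n=w^n$ almost everywhere. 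The third step is then immediate and carries no difficulty: Step~2 transports $u_{rec}^n$ into $w^n(\cdot-V\Delta t)=w^{n+1}$, and Step~3 averages it over each cell, which is exactly $H(n+1)$.

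The only genuine obstacle is the combinatorial bookkeeping of the second paragraph. One must guarantee both that no cell ever sees two distinct jumps and that the mass-conservation reconstruction, which blindly feeds on $u_{j-1}^n$ and $u_{j+1}^n$, is always supplied the correct plateau values; this is exactly where the hypothesis on the width of the plateaus enters, and if it failed (a pure cell with two transition neighbours, say $a<c<e$), the reconstruction would place a spurious jump and the induction would break. A little extra care is needed at the boundary cases where a jump sits exactly on a grid point $x_{k-1/2}$: there the straddling cell degenerates into a pure cell, so no transition cell arises and the count of pure cells only improves, which is harmless.
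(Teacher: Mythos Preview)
Your proof is correct and follows the same approach as the paper: establish by induction that the reconstruction $u^n_{rec}$ coincides with the exact solution $w^n$, the key point being that plateaus of width $>3\Delta x$ force at least two consecutive pure cells, so every transition cell is flanked by pure cells carrying the genuine plateau values and every pure cell has a same-valued neighbour. You have simply carried out in detail what the paper summarises as ``the computation is a bit tedious but the idea is simple''.
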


\begin{proof}
The proof can be found in~\cite{DL02}. It boils down to proving that $u^n_{rec}$ is the $L^2$-projection on the grid of the exact solution at time $n \Delta t$, which is $x \mapsto u^0(x-V n \Delta t)$. The computation is a bit tedious but the idea is simple. Let us focus on the initialization. As plateaus are wider than $3 \Delta x$, each of them contains at least two consecutive cells which have the same value after the initialization~\eqref{eq:ini}. It ensures that the first step of the reconstruction scheme is successful only on cells containing a discontinuity of the initial data $u^0$, and thus that $u^0=u^0_{rec}$.
\end{proof}

Numerically it is observed that this class of initial data behaves as an attractor. Plateaus are created in the first time steps and are then advected exactly. We have the following conjecture, that looks very much like Theorem~\ref{thmconv} but with a time independent constant.

\begin{conj}\label{conjconv}
Let $u^0$ be a function with total bounded variations to which we associate the initialization $u_j^0= \frac{1}{\Delta x} \int_{x_{i-1/2}}^{x_{i+1/2}} u^0(x) dx$. Suppose that the ratio $\lambda=\frac{\Delta t}{V \Delta x}$ is kept fixed and belongs to $(0,1] \setminus \{1/2\}$. Then there exists a constant $C>0$, depending only on $u^0$ and $\lambda$ such that 
\[ \forall n \in \N, \qquad \Delta x \sum_{j \in \Z} \left| u_j^n -  \frac{1}{\Delta x}  \int_{x_i-\Delta x /2}^{x_i+\Delta x /2} u(n \Delta t, x) dx  \right| \leq C \sqrt{\Delta x}. \]
\end{conj}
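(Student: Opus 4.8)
The plan is to read the scheme as a dynamical system and to exploit the exactness statement of Proposition~\ref{attractor}: the profiles that are piecewise constant with plateaus wider than $3\Delta x$ form an invariant family on which the scheme acts as the exact rigid translation $u(t,x)=u^0(x-Vt)$. The conjecture then asserts two things at once — that a general $\mathrm{BV}$ initialization is \emph{attracted} to this family, and that the $L^1$ error produced \emph{on the way to it} is summable in $n$, so that it never accumulates into the $\sqrt{n\Delta t}$ growth of the upwind scheme in Theorem~\ref{thmconv}. Concretely, I would introduce the exact cell averages
\[ \bar u_j^n = \frac{1}{\Delta x}\int_{x_{j-1/2}}^{x_{j+1/2}} u^0(x-Vn\Delta t)\ud x \]
and the error profile $e^n=(u_j^n-\bar u_j^n)_j$, and aim at a bound on $\Delta x\sum_j|e_j^n|$ that is uniform in $n$.

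The first reduction is to remove the rigid transport. When $\lambda$ is rational, say $\lambda=p/q$ in lowest terms, the exact solution moves by exactly $p$ cells after $q$ steps; composing $q$ steps of the scheme with the inverse shift by $p$ cells yields a single map $\Phi$ on discrete $\mathrm{BV}$ profiles for which the wide-plateau staircases are genuine fixed points. The conjecture becomes: every orbit of $\Phi$ converges to this fixed set quickly enough that the total error it sheds is finite. The excluded value $\lambda=1/2$ is precisely the resonance at which, as the rest of the paper shows, the analogue of $\Phi$ is over-compressive and collapses monotone data to a Heaviside rather than to the expected staircase, so it genuinely falls outside this picture and must be treated apart; irrational $\lambda$ would in turn require a renormalization or continuity argument to pass from the rational case.

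The engine should be the structural properties that Despr\'es--Lagouti\`ere-type schemes are known to satisfy: a discrete maximum principle and the total-variation-diminishing bound $\mathrm{TV}(u^n)\le\mathrm{TV}(u^0)$, which forbid the creation of new extrema and bound the number of active transition regions by the number of jumps of $u^0$. Away from these finitely many regions the reconstruction step is exact and transports values without error, exactly as in the proof of Proposition~\ref{attractor}, so that the whole error is localized. At a genuine jump of $u^0$ the sub-cell position $d_j^n$ lets the reconstruction follow the discontinuity to within one cell, so that isolated discontinuities cost only $O(\Delta x)$ and are essentially free. The weaker $\sqrt{\Delta x}$ rate in the statement is forced by the \emph{continuous, monotone} parts of $u^0$: the over-compressive tendency of the scheme replaces a smooth ramp by an emerging plateau (staircase) structure, and the $L^1$ cost of this staircasing is of order $\sqrt{\Delta x}$ — precisely the plateau-formation mechanism the paper isolates in a special case.

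The hard part, and the reason the statement is still a conjecture, is the convergence-with-summable-transient estimate for $\Phi$. One must understand quantitatively how plateaus nucleate, grow, merge and then lock into the grid-aligned staircase, and show that the error generated during this transient decays geometrically (or vanishes in finitely many steps) region by region, so that the time sum converges uniformly in $\Delta x$. Because $\Phi$ is nonlinear and non-smooth — the reconstruction switches between the sub-cell-jump branch and the ``do nothing'' branch of Step~(1) — no soft contraction or compactness argument seems to apply directly, and the combinatorics of plateau interactions is delicate. Controlling this transient uniformly, and ruling out pathological slowly-converging configurations for every $\lambda\in(0,1]\setminus\{1/2\}$, is where the main difficulty lies.
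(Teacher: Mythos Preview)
The statement you are attempting to prove is labeled \emph{Conjecture} in the paper, and the authors are explicit that it ``remains open at the moment''; there is no proof in the paper to compare your attempt against. What the paper does offer is partial evidence: the over-compressivity result for $\lambda=1/2$ (explaining the exclusion of that value) and, in Section~\ref{5conf}, an exponential-convergence result for an open set of $5$-configurations when $\lambda\neq 1/2$. Your outline is broadly consonant with that evidence, but it is not a proof, and you yourself say so in your final paragraph.

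The genuine gap is exactly the one you name: you have no mechanism to establish the summable-transient estimate for your map $\Phi$. Everything before that --- the reduction to a fixed-grid map for rational $\lambda$, the localization of the error to finitely many transition regions via TVD and the maximum principle, the observation that isolated jumps cost $O(\Delta x)$ --- is reasonable scaffolding, but it does not touch the core difficulty. In particular, your claim that the $\sqrt{\Delta x}$ rate is ``forced'' by the staircasing of smooth ramps is a heuristic, not an estimate: you would need to show both that the number and heights of the emerging plateaus scale correctly with $\Delta x$ \emph{and} that the transient during which they form contributes only $O(\sqrt{\Delta x})$ uniformly in $n$. The paper's Proposition~\ref{PropConvExp} gives geometric contraction only for a very restricted class of initial data (five jumps, with explicit inequality constraints), and already there the argument is a delicate case-by-case computation of where each reconstructed discontinuity falls relative to the shifted interfaces; extending that to arbitrary BV data, handling plateau mergers, and controlling the constants uniformly in $\lambda$ is precisely the open problem. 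Your proposal identifies the right objects and the right obstacle, but supplies no argument to overcome it.
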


This conjecture states that there is a global attractor $A$ for bounded increasing configurations, made of solutions whose reconstructions have plateaus of width bigger than 3 (see Figure \ref{FigAttrac}). In other words, we expect that for any bounded increasing initial data $(u_j^0)$ and for any $\eps>0$, there is a solution $(\tilde u_j^0)\in A$ and an integer $N\in\N$ such that for any $n\ge N$, one has $\|(u_j^n)_j - (\tilde u_j^n)_j\|_\infty \le \varep$. Note that the elements of $A$ are almost periodic. For more details about the concept of attractor, see \cite{MR818833, MR790735}, \cite{Milnor:2006}.

\begin{figure}
\begin{center}
\begin{tikzpicture}[scale=.9]

\draw[color=blue!80!black, thick] (0,0) to[bend left] (3,2)node[right]{$A$};
\draw[->, color=gray!50!black] (-1.2,1) -- (-.1,.3);
\draw[->, color=gray!50!black] (-.2,2) -- (.6,1.2);
\draw[->, color=gray!50!black] (1.2,3) -- (1.7,2.1);
\draw[->, color=gray!50!black] (3,.3) -- (2.5,1.3);
\draw[->, color=gray!50!black] (2,0) -- (1.3,.8);
\draw[->, color=gray!50!black] (1.4,-.3) -- (.6,.2);

\end{tikzpicture}
\end{center}
\caption{\label{FigAttrac}Picture of an attractor $A$: any solution eventually approaches a solution inside the set $A$ of solutions whose reconstructions have plateaus of width bigger than 3}
\end{figure}
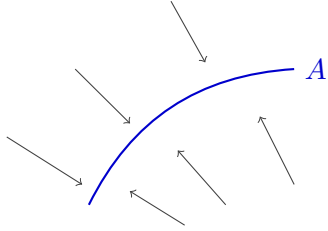

\bigskip

The main result of our work is to prove that this conjecture does not hold if $\lambda=1/2$. We roughly speaking prove the following result (see Theorem~\ref{thmover} for a precise statement).

\begin{theo} \label{conv1/2}
Suppose that the initial sequence $(u_j^0)_{j \in \Z}$ strictly increases from a constant value $\alpha$ to another constant value $\beta$. Then for all $n\in\N$ large enough, there exists an integer $j_\infty^n\in\Z$ such that $(u_j^n)_{j \in \Z}$ is a discrete Heaviside function, that is,
  $$ u_j^n=\begin{cases}
            \alpha & \text{ if }j < j_\infty^n \\ 
            \beta & \text{ if } j > j_\infty^n
           \end{cases} $$
 \end{theo}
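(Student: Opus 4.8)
The plan is to work in the normalised setting $\Delta x = 1$, $V\Delta t = \tfrac12$ (so $\lambda = \tfrac12$) and, since the scheme commutes with any increasing affine change of the unknown, to assume $\alpha = 0$ and $\beta = 1$. First I would prove by induction on $n$ that the sequence stays $[0,1]$-valued and nondecreasing: granting this at time $n$, the reconstruction constraint $u_j^n\in[u_{j-1}^n,u_{j+1}^n]$ always holds, so the first step never fails and the map is given by a single formula rather than by cases. A direct computation of the reconstruct--translate--project procedure at $\lambda=\tfrac12$ then yields the conservation form
$$u_j^{n+1} = u_j^n - (G_j^n - G_{j-1}^n), \qquad G_j^n = \min\Bigl(\tfrac12 u_{j+1}^n,\ u_j^n - \tfrac12 u_{j-1}^n\Bigr),$$
the minimum being exactly the antidiffusive (downwind) flux that characterises the scheme; monotonicity and range at time $n+1$ then follow from this formula, closing the induction. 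I would adopt the min-flux identity as the sole description of the dynamics.

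The second step is to pass to the increments $\delta_j^n = u_{j+1}^n - u_j^n \ge 0$, a finite nonnegative measure on $\Z$ of total mass $\beta-\alpha = 1$; the target is precisely that the support of $\delta^n$ eventually occupies at most two adjacent sites (one site for a sharp Heaviside, two when the jump is split over a single intermediate cell). Here I would isolate the quantities that genuinely are monotone. A summation by parts on the conservation form shows that the centre of mass $\bar\jmath^n = \sum_j j\,\delta_j^n$ drifts rightward at the exact speed $\tfrac12$ per step, which is the conservative transport drift. The left edge $m^n = \min\{j : u_j^n > 0\}$ is nondecreasing, and the leftmost positive cell obeys the scalar rule $u_{m}^{n+1} = \max\bigl(u_m^n - \tfrac12 u_{m+1}^n,\,0\bigr)$: the transition is eroded, and never enlarged, on its left side. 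The asymmetry of the advection means the right edge $M^n = \max\{j : u_j^n < 1\}$ may instead advance by spawning one new intermediate cell, and I would compute this creation explicitly in the deficit variable $1-u_j$ to quantify how shallow any newly created cell is.

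The heart of the proof --- and the step I expect to be the main obstacle --- is to turn this erosion/creation picture into a finite-time collapse to $M^n\le m^n$. The difficulty is genuine: the limiting object is not a fixed point but a period-two cycle (sharp Heaviside $\leftrightarrow$ split Heaviside), so every obvious monovariant fails. The number of intermediate cells, the width $M^n-m^n$, the spread $\sum_j \delta_j^n (j-\bar\jmath^n)^2$, and $\sum_j u_j^n(1-u_j^n)$ all oscillate along this cycle and are therefore not monotone even near the attractor --- explicit short orbits show each of them decreasing and then increasing again. I would therefore seek a Lyapunov functional that is constant on the two-cycle and strictly decreasing off it, for instance a spread-type quantity corrected by the parity of the step, or a weighted count balancing the left absorption against the bounded right creation, and show it can strictly decrease only finitely often before the support reaches width two. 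An alternative, more combinatorial route is to prove that over any sufficiently long window of steps the total left absorption strictly dominates the mass re-created on the right, using the shallowness estimate from the second step, forcing $M^n-m^n$ down to $0$; making this domination quantitative and uniform, rather than merely asymptotic, is the technical crux.
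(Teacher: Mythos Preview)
Your outline correctly isolates the real difficulty --- the attractor is a two-cycle, so width, spread, and the obvious entropies all oscillate --- but the proposal stops precisely where the work begins: you offer only a hope that some parity-corrected Lyapunov functional or a ``domination over long windows'' argument can be found, without producing one. As stated, this is a genuine gap.

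The paper's proof rests on an idea you do not mention. Write $S_{j}^n$ for the jumps. The crucial lemma is that every \emph{inner} jump satisfies $S_j^{n+1}\ge \min(S_{j-1/2}^n,S_{j+1/2}^n)$; consequently, if the initial data has all inner jumps at least some $\alpha>0$ (which is automatic for a \emph{strictly} increasing transition), then every inner jump stays $\ge\alpha$ for all time. This invariant gives a fixed scale against which the extreme jumps can be measured. The second lemma analyses the boundary: if the first jump is smaller than the second, it is absorbed in one step; if it is larger, then after two steps it has decreased by at least $\alpha/4$. Symmetrically at the right end. With this, the proof becomes a finite automaton on the four states $\{\text{LS},\text{SL}\}\times\{\text{SL},\text{LS}\}$ recording whether each extreme jump is larger or smaller than its inner neighbour: the state SL/LS loses a jump, LS/SL gains one but is forced into SL/LS, and the two ``stationary'' states LS/LS and SL/SL can persist for at most $O(1/\alpha)$ steps before the $\alpha/4$-decrement forces the extreme jump below $\alpha$ and hence into the absorbing case. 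The net effect is that the number of nonzero jumps reaches $2$ in finitely many steps.

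Your flux description and centre-of-mass drift are correct but do not see this structure, because they track global quantities while the mechanism is entirely local to the two endpoints. The missing ingredient is not a cleverer Lyapunov functional on the whole configuration; it is the preserved lower bound on interior jumps, which converts the qualitative ``erosion'' you describe into a quantitative $\alpha/4$ per two steps at each end.
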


We also get a similar statement for initial conditions that are ``half infinite staircases'' in Proposition \ref{PropEscalier}.

In a last part of the article, we prove a special case of Conjecture \ref{conjconv} in the case where the grid is alternatively shifted to the left and to the right of a parameter $\lambda\neq 1/2$ (see Proposition \ref{PropConvExp} for a precise statement).

\begin{theo}
For any $\lambda\in(0,1)\setminus \{1/2\}$, there exists a nonempty open set of initial data for which $(u_i^n)_i$ converges uniformly exponentially fast towards a limit configuration.
\end{theo}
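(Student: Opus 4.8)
The plan is to reduce the scheme to a finite-dimensional piecewise-affine dynamical system and to exhibit inside it an attracting fixed point whose basin is the desired open set. Because the grid is shifted right by $\lambda$ and then left by $\lambda$, a full cycle returns the grid to its original alignment, so the natural object is the \emph{two-step return map} $\Phi$ obtained by composing a right-shift step with a left-shift step; $\Phi$ is autonomous. I restrict to monotone profiles that are constant equal to $\alpha$ on the far left and to $\beta$ on the far right and have only finitely many transitional cells in between. On such profiles the Despr\'es--Lagouti\`ere reconstruction always succeeds away from the transition and fails (``do nothing'') exactly at the cells where the reconstructed profile ceases to be monotone, so $\Phi$ depends only on the finitely many transitional values and is explicitly computable. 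A direct computation of the re-averaging after a right shift gives affine update rules; for instance, with plateaus $\alpha<\beta$ and two transitional values $a<b$ one finds
\[ a \longmapsto a-\lambda(b-\alpha), \qquad b\longmapsto b-\lambda(\beta-b), \]
and the mirror formulas for the left shift. The fixed points of $\Phi$ include precisely the exactly-advected plateau configurations of Proposition~\ref{attractor} (plateaus of width $>3\Delta x$), since those are transported exactly right and then left.

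Next I would single out one such plateau configuration $\bar u$ as the candidate limit and study $\Phi$ near it. The linear parts of the affine pieces are area-expanding (the determinant of the right-then-left composition on the ``all monotone'' piece is $(1+\lambda)^2>1$), so a small perturbation of $\bar u$ does not remain in a single piece: the transitional values overshoot the plateau levels in finitely many cycles, at which point the monotonicity-failure clause flattens the offending cells. It is exactly this clamping that produces and stabilises the wide plateaus, and it is the contracting ingredient of the dynamics. The core of the proof is therefore to compute the return map to the section on which the clamping is active, to show that it has $\bar u$ as a hyperbolic attracting fixed point, and to identify its contraction rate. Concretely I would write the one-cycle linearisation as the product of the affine linear parts of the visited pieces composed with the rank-dropping projections coming from the ``do nothing'' cells, and estimate its spectral radius as an explicit function of $\lambda$.

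The expected outcome is that this spectral radius is strictly smaller than $1$ for every $\lambda\in(0,1)\setminus\{1/2\}$ and that an eigenvalue lands on the unit circle precisely at $\lambda=1/2$; the neutral case $\lambda=1/2$ is exactly where the overcompression of Theorem~\ref{conv1/2} destroys the plateau structure and collapses everything onto a single Heaviside jump, which is why the time-uniform error predicted by Conjecture~\ref{conjconv} is expected to fail only there. Granting the strict contraction, the basin of $\bar u$ contains an open neighbourhood of $\bar u$, the iterates of $\Phi$ converge to $\bar u$ geometrically on it, and unfolding the two-step map back into the alternating one yields uniform exponential convergence of $(u_i^n)_i$ towards the $2$-periodic orbit generated by $\bar u$. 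Choosing $\bar u$ with plateaus of width $>3\Delta x$ guarantees, through Proposition~\ref{attractor}, that the limit configuration is genuinely advected exactly.

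The main obstacle will be the combinatorial bookkeeping, not any single computation. I must prove that for all data in a full open neighbourhood of $\bar u$ the sequence of affine pieces visited by $\Phi$ stabilises (becomes eventually periodic), so that the asymptotic dynamics is governed by one fixed product of affine-and-projection maps whose eigenvalues can be controlled; in particular I must rule out that overshooting cells trigger cascades which interact with neighbouring discontinuities or erode the plateaus below the critical width $3\Delta x$. Handling the non-smoothness of $\Phi$ along the clamping boundaries, and pinning down the precise eigenvalue so as to locate the threshold at $\lambda=1/2$, are the delicate points; everything else is the routine re-averaging algebra illustrated by the displayed update rules.
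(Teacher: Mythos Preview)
Your proposal rests on a mechanism that does not operate here. For nondecreasing initial data the scheme is monotonicity-preserving (this is Lemma~\ref{monotonicity} in the paper), so the ``do nothing'' clause is \emph{never} triggered on the configurations you consider: a strictly increasing perturbation of a plateau configuration stays strictly increasing for all time, the reconstruction succeeds in every transitional cell, and there is no clamping to provide the rank-dropping projections you are counting on. Consequently your claimed decomposition ``affine expansion $+$ clamping contraction'' cannot be the right picture; the spectral-radius argument you outline has nothing to contract with. Your displayed one-step update rules and the assertion that the two-step linear part has determinant $(1+\lambda)^2$ are also not correct as stated (in particular your formula $a\mapsto a-\lambda(b-\alpha)$ would immediately destroy monotonicity), which suggests the re-averaging algebra was not carried out.

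The paper's proof is much more concrete and avoids all of this. One works on \emph{5-configurations} (four intermediate values $0<u_1<u_2<u_3<u_4<1$), imposes a short explicit list of open inequalities on $(u_1,\dots,u_4)$, and simply computes the two-step return map on that open region. The key observation is that under those inequalities the position of every reconstructed discontinuity relative to the shifted interface is determined (each $d_j$ is known to be $\ge\lambda$ or $\le\lambda$), so the two-step map is a single explicit affine map on the whole region; one reads off
\[
\varepsilon^{2n+2} = 4\lambda^2\,\varepsilon^{2n},\qquad \varepsilon^{2n}:=u_3^{2n}-u_2^{2n},
\]
and the remaining coordinates move by amounts proportional to $\varepsilon^{2n}$. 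Since $4\lambda^2<1$ for $\lambda\in(0,1/2)$, this gives uniform exponential convergence towards an explicit limit with a width-$2$ plateau $u_2^\infty=u_3^\infty$; the imposed inequalities are checked to be stable under the iteration, so one never leaves the affine piece. The exclusion of $\lambda=1/2$ is visible directly in the rate $4\lambda^2$, not through a bifurcation of an eigenvalue of a composite map. If you want to salvage your outline, drop the clamping idea entirely and instead look for an open region of monotone configurations on which the branch of the piecewise-affine map is fixed; the contraction is already present in that single branch.
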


In Section~\ref{notation}, we simplify the problem and present important lemmas. Some numerical illustrations of Conjecture~\ref{conjconv} and Proposition~\ref{attractor} are given in Section~\ref{simu}. Theorem~\ref{conv1/2} is proven in Section~\ref{lambda=1/2}. Eventually in Section~\ref{5conf}, a particular case illustrating the exponential convergence toward solutions with plateaus is studied. This result is far from being a complete proof of Conjecture~\ref{conjconv} which remains open at the moment.

\section{A related shifted grids dynamical process} \label{notation}

In order to simplify the analysis while retaining the most important aspects, we do the following modifications. First, we set $V=1$, $\Delta x=1$ and $x_j=j$. More importantly, instead of moving the reconstructed solution of $\lambda$ to the right, we shift the grid of $\lambda$ to the left for odd iteration in time and of $\lambda$ to the right for even iteration in time. The advantage of shifting the grid alternatively to the left and to the right is obviously that we end up with the same grid after two iterations. If $\lambda=1/2$, there is no difference with the case where the grid is always shifted to the left. For any real number $a$ we denote by $\mathcal{C}_a$ the interval centered around $a$ of size $1$: $\mathcal{C}_a=(a-1/2, a+1/2)$.

The structure of the scheme follows the same guidelines than the Despr\'es and Lagouti\`ere scheme presented in the previous section. The process is initialized with the sequence $(u_j^0)_{j \in \Z}$ given by~\eqref{eq:ini}.
For odd iteration in time the process is centered on integer points $j$ at the beginning on the time step and the grid is shifted of $\lambda$ to the left. 
The three steps are the following
\begin{enumerate}
 \item \emph{Reconstruction step.} Compute the distance $d_j^{2n}$ from the \emph{right} interface $j+1/2$ such that 
 \[(1-d_j^{2n}) u_{j-1}^{2n} + d_j^{2n} u_{j+1}^{2n} = u_j^{2n}.\]
One gets
 \begin{equation}\label{EqAlpha}
d_j^{2n} = \frac{u_{j}^{2n} - u_{j-1}^{2n}}{u_{j+1}^{2n} - u_{j-1}^{2n}},
\end{equation}
and we set arbitrarily $d_j^{2n}=-1$ if it is not defined.
Then, define
 \[ u_{j,L}^{2n}= \begin{cases}
                u_{j-1}^{2n} & \text{ if } 0<d_j^{2n}<1, \\
                u_{j}^{2n} & \text{ otherwise, }  \\
               \end{cases}
\qquad
     u_{j,R}^{2n}= \begin{cases}
                u_{j+1}^{2n} & \text{ if } 0<d_j^{2n}<1, \\
                u_{j}^{2n} & \text{ otherwise. }  \\
               \end{cases}
\]
The reconstructed solution at iteration $2n$ is obtained as
\[ u^{2n}_{rec}(x)= \sum_{j \in \Z} \left( u_{j,L}^{2n} \mathbf{1}_{d_j^{2n}<(j+1/2)-x<1} + u_{j,R}^{2n} \mathbf{1}_{0<(j+1/2)-x<d_j^{2n}} \right) \mathbf{1}_{x \in \mathcal{C}_{j}}.\]
\item \emph{Shifting.} Shift the grid of $\lambda$ to the left and define
\[ u_{j-\lambda}^{2n+1}= \int_{\mathcal{C}_{j-\lambda}} u^{2n}_{rec}(x) \, dx \]
and $\displaystyle u^{2n+1}(x)=\sum_{j \in \Z} u_{j-\lambda}^{2n} \mathbf{1}_{x \in \mathcal{C}_{j-\lambda}}. $
\end{enumerate}
At the beginning of an even iteration in time, the cells are centered around the points $(j-\lambda)_{j \in \Z}$, and we follow the same process but move the grid to the right:
\begin{enumerate}
 \item \emph{Reconstruction step.} Compute the distance $d_{j-\lambda}^{2n+1}$ from the \emph{left} interface such that 
 \[d_{j-\lambda}^{2n+1} u_{j-1-\lambda}^{2n+1} + (1-d_{j-\lambda}^{2n+1}) u_{j+1-\lambda}^{2n+1} = u_{j-\lambda}^{2n+1}.\]
 If does not exists, set $d_{j-\lambda}^{2n+1}=-1$. Then define
 \[ u_{j-\lambda,L}^{2n}= \begin{cases}
                u_{j-1-\lambda}^{2n+1} & \text{ if } 0<d_{j-\lambda}^{2n+1}<1, \\
                u_{j-\lambda}^{2n+1} & \text{ otherwise, }  \\
               \end{cases}
\
     u_{j-\lambda,R}^{2n}= \begin{cases}
                u_{j+1-\lambda}^{2n+1} & \text{ if } 0<d_{j-\lambda}^{2n+1}<1, \\
                u_{j-\lambda}^{2n+1} & \text{ otherwise. }  \\
               \end{cases}
\]
The reconstructed solution at iteration $2n+1$ is 
\[ u^{2n+1}_{rec}(x)= \sum_{j \in \Z} \left( u_{j,L}^{2n} \mathbf{1}_{0<x-(j-\lambda-1/2)<d_{j-\lambda}^{2n+1}} + u_{j,R}^{2n} \mathbf{1}_{d_{j-\lambda}^{2n+1}<x-(j-\lambda-1/2)<1} \right)\mathbf{1}_{x \in \mathcal{C}_{j-\lambda}}.\]
\item \emph{Shifting.} Shift the grid of $\lambda$ to the right and define
\[ u_{j}^{2n+2}= \int_{\mathcal{C}_{j}} u^{2n+1}_{rec}(x) \, dx \]
and $u^{2n+2}(x)=\sum_{j \in \Z} u_{j}^{2n+2} \mathbf{1}_{x \in  \mathcal{C}_{j}}. $
\end{enumerate}

The notations are gathered on Figure~\ref{F:RecScheme2}.
In our analysis we restrict our attention to the case of nondecreasing initial data. 
If the initial data is increasing, this property is inherited at each time step.

\begin{figure}[h!tp]
\begin{psfrags}
\psfrag{uj}{$\textcolor{mygreen}{u_j^{2n}}$}
\psfrag{uj-1}{$\textcolor{mygreen}{u_{j-1}^{2n}}$}
\psfrag{uj+1}{$\textcolor{mygreen}{u_{j+1}^{2n}}$}
\psfrag{ujl}{$\textcolor{mypurple}{u_{j,L}^{2n}}$}
\psfrag{ujr}{$\textcolor{mypurple}{u_{j,R}^{2n}}$}
\psfrag{unrec}{$\textcolor{mypurple}{u_{rec}^{2n}}$}
\psfrag{ujb}{$\textcolor{mygreen}{u_j^{2n}}$}
\psfrag{uj-1b}{$\textcolor{mygreen}{u_{j-1}^{2n+1}}$}
\psfrag{uj+1b}{$\textcolor{mygreen}{u_{j+1}^{2n+1}}$}
\psfrag{ujb}{$\textcolor{mygreen}{u_{j}^{2n+1}}$}
\psfrag{unrecb}{$\textcolor{mypurple}{u_{rec}^{2n}}$}
\psfrag{d}{$d_j^{2n}$}
\psfrag{l}{$\lambda$}
\psfrag{j}{$j-1/2$}
\psfrag{j+1}{$j+1/2$}
\psfrag{j-l}{$j-1/2-\lambda$}
\psfrag{j+1-l}{$j+1/2-\lambda$}
\psfrag{Cj}{$\mathcal{C}_j$}
\psfrag{Cj-l}{$\mathcal{C}_{j-\lambda}$}
 \includegraphics[width=\linewidth]{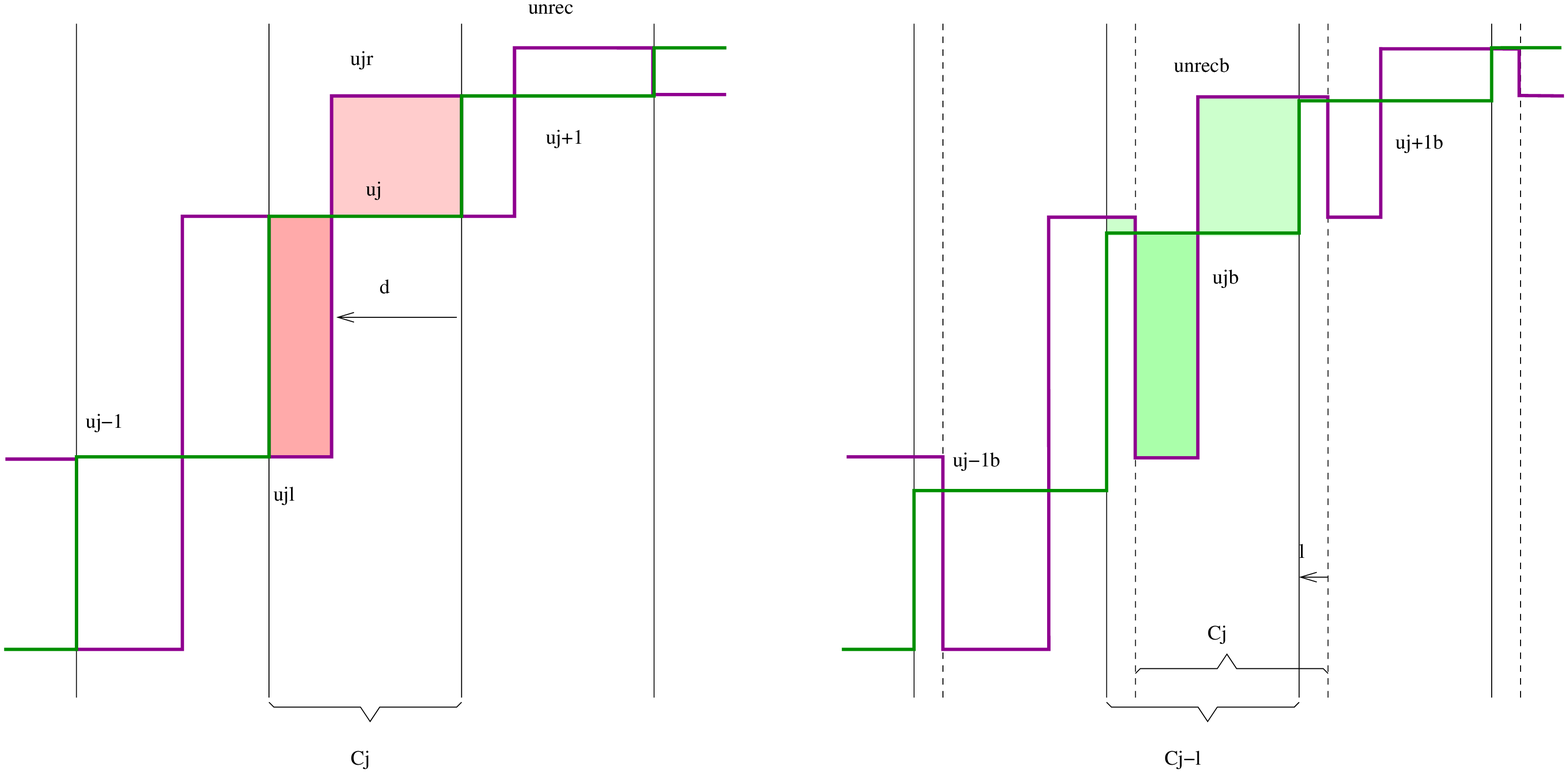} 
 \caption{The process where the grid is shifted to the left (odd iterations in time).}\label{F:RecScheme2}
\end{psfrags}
\end{figure}

\begin{lemme} Suppose that $x \mapsto u^n(x)$ is non-decreasing. Then $x \mapsto u^{n+1}(x)$ is also non-decreasing.
\end{lemme}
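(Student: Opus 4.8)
The plan is to reduce the global monotonicity of $u^{n+1}$ to a purely local, one–cell statement about the reconstruction, and then to verify that statement by an explicit but short computation. By the left–right symmetry of the scheme it suffices to treat one parity, say the half-step $u^{2n}\mapsto u^{2n+1}$ in which the grid is shifted to the left (the step $u^{2n+1}\mapsto u^{2n+2}$ is the mirror image). Since $u^{2n}$ is non-decreasing, inside each cell $\mathcal{C}_j$ the reconstruction takes the two values $u_{j,L}^{2n}=u_{j-1}^{2n}\le u_{j,R}^{2n}=u_{j+1}^{2n}$ (or the single value $u_j^{2n}$ when the reconstruction is not performed), so $u^{2n}_{rec}$ restricted to $\mathcal{C}_j$ is a non-decreasing step function with mean $u_j^{2n}$ and range in $[u_{j-1}^{2n},u_{j+1}^{2n}]$. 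For $\lambda\in(0,1)$ I introduce the two \emph{one-cell} masses
\[ m_j=\int_{j+1/2-\lambda}^{j+1/2}u^{2n}_{rec},\qquad M_j=\int_{j-1/2}^{j-1/2+(1-\lambda)}u^{2n}_{rec}, \]
carried by the rightmost part (length $\lambda$) and the leftmost part (length $1-\lambda$) of the reconstruction in $\mathcal{C}_j$; these are complementary, so $m_j+M_j=u_j^{2n}$.

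First I would note that the only old interface inside the shifted cell $\mathcal{C}_{j-\lambda}$ is $j-1/2$, which cuts $\mathcal{C}_{j-\lambda}$ into the rightmost length-$\lambda$ piece of $\mathcal{C}_{j-1}$ and the leftmost length-$(1-\lambda)$ piece of $\mathcal{C}_j$. Hence
\[ u_{j-\lambda}^{2n+1}=\int_{\mathcal{C}_{j-\lambda}}u^{2n}_{rec}=m_{j-1}+M_j, \]
and consequently
\[ u_{(j+1)-\lambda}^{2n+1}-u_{j-\lambda}^{2n+1}=(m_j-m_{j-1})+(M_{j+1}-M_j). \]
Thus it is enough to prove that both sequences $(m_j)_j$ and $(M_j)_j$ are non-decreasing in $j$: the lemma then follows by adding the two inequalities.

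The core of the argument, and its only real difficulty, is that $u^{2n}_{rec}$ is \emph{not} monotone across interfaces: at each interface the reconstruction jumps down from $u_{j+1}^{2n}$ to $u_j^{2n}$, so the profiles of two consecutive cells are not pointwise ordered and $m_j\le m_{j+1}$ cannot be read off termwise. To get it I would use the exact position of the discontinuity. When both cells are reconstructed, the rightmost fraction $d_j^{2n}=\frac{u_j^{2n}-u_{j-1}^{2n}}{u_{j+1}^{2n}-u_{j-1}^{2n}}$ of $\mathcal{C}_j$ carries $u_{j+1}^{2n}$, whence
\[ m_j=\min(\lambda,d_j^{2n})\,u_{j+1}^{2n}+(\lambda-d_j^{2n})_+\,u_{j-1}^{2n}, \]
and similarly for $m_{j+1}$ with indices shifted by one. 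One always has $m_j\le\lambda u_{j+1}^{2n}$, so if $\lambda\le d_{j+1}^{2n}$ then $m_{j+1}=\lambda u_{j+2}^{2n}\ge m_j$; the remaining cases are handled through the identity $d_j^{2n}(u_{j+1}^{2n}-u_{j-1}^{2n})=u_j^{2n}-u_{j-1}^{2n}$, which turns the case $\lambda>d_j^{2n}$ into the closed form $m_j=u_j^{2n}-(1-\lambda)u_{j-1}^{2n}$ and reduces the inequality to $u_{j+1}^{2n}-u_j^{2n}\ge(1-\lambda)(u_j^{2n}-u_{j-1}^{2n})$, which itself follows from $\lambda>d_j^{2n}$ together with $\lambda\le1$. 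The degenerate cells, where the reconstruction is not performed and $m_j=\lambda u_j^{2n}$, are exactly the limits $d_j^{2n}\in\{0,1\}$ of these formulas and are checked identically, and the monotonicity of $(M_j)_j$ is the same statement read on the space-reflected configuration. I expect this interface overshoot to be the one delicate point: it forces the use of the precise value of $d_j^{2n}$ rather than a soft pointwise comparison, while everything else is bookkeeping that the preceding reduction has made local.
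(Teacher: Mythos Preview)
Your argument is correct, but it takes a substantially more laborious route than the paper's. You decompose $u_{j-\lambda}^{2n+1}=m_{j-1}+M_j$ and prove separately that $(m_j)_j$ and $(M_j)_j$ are non-decreasing, which forces you into a case analysis on the relative positions of $\lambda$ and $d_j^{2n}$, $d_{j+1}^{2n}$ and into the explicit closed forms $m_j=u_j^{2n}-(1-\lambda)u_{j-1}^{2n}$ etc. The paper bypasses all of this with a two-line sandwich: since the reconstruction restricted to $\mathcal{C}_j$ is non-decreasing with mean $u_j^{2n}$, the mean of $u_{rec}^{2n}$ over any left sub-interval $[j-\tfrac12,j-\tfrac12+r]$ lies in $[u_{j-1}^{2n},u_j^{2n}]$, and symmetrically for the right sub-interval of $\mathcal{C}_{j-1}$; combining the two pieces gives $u_{j-\lambda}^{2n+1}\in[u_{j-1}^{2n},u_j^{2n}]$, and hence $u_{j-\lambda}^{2n+1}\le u_j^{2n}\le u_{(j+1)-\lambda}^{2n+1}$. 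In particular, the ``interface overshoot'' you flag as the delicate point is not delicate at all in this framing: the downward jump of $u_{rec}^{2n}$ across $j+\tfrac12$ never enters the argument, because one never compares values of the reconstruction across an interface, only averages over sub-intervals of a single cell against the full-cell mean. What your approach buys is a slightly finer statement (monotonicity of each partial mass, not just of their sum), but at the price of computations with $d_j^{2n}$ that the lemma itself does not require.
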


\begin{proof} \label{monotonicity}
Suppose that $n$ is even. Let us denote by $u_{rec}^n$ the reconstruction map, which is increasing. Then the mean value function
\[ r \mapsto \dfrac{1}{r} \int_{j-1/2}^{j-1/2+r} u_{rec}^n(x) \, dx \]
increases on $(0,1)$ from $u_{j-1}^n$ for $r=0$ to $u_{j}^n$ for $r=1$. One equally proves that the mean of $u_{rec}^n$ on $[j-1/2-r,j-1/2]$, $r \in (0,1)$ belongs to $[u_{j-1}^n,u_{j}^n]$. Thus, the mean of $u_{rec}^n$ on $\mathcal{C}_{j-\lambda}$ belongs to $[u_{j-1}^n,u_{j}^n]$, in other words $u_{j-\lambda}^{n+1} \in [u_{j-1}^n,u_{j}^n]$ which is smaller than $u_{j+1-\lambda}^{n+1} \in [u_{j}^n,u_{j+1}^n]$.
When $n$ is odd, we similarly prove that $u_j^{n+1}$ belongs to $[u_{j-\lambda}^n, u_{j+1-\lambda}^n]$.
\end{proof}
The general case follows from the study of the non decreasing one, as explained by the following lemma.
\begin{lemme}
Suppose that $(u_j^0)_{j\in \Z}$ is given. Without loss of generality, we suppose that $u_0^0=0$. Let us define $(v_j^0)_{j\in \Z}$, $(w_j^0)_{j\in \Z}$ by $v_0^0=w_0^0=0$ and
 \[ \begin{matrix}
 v_{j+1}^0= \begin{cases}
                v_j^0+ (u_{j+1}^0-u_j^0) & \text{ if } u_{j+1}^0>u_j^0 \\
                v_j^0 & \text{ if } u_{j+1}^n \leq u_j^n
               \end{cases} \\
w_{j+1}^0= \begin{cases}
                w_j^0 & \text{ if } u_{j+1}^0>u_j^0 \\
                w_j^0 + (u_{j+1}^0-u_j^0) & \text{ if } u_{j+1}^0 \leq u_j^0
               \end{cases}     
    \end{matrix}
\]
Then for all $n \geq 0$, 
$$(u_{j- \lambda (n \mod2)}^{n})_{j \in  \Z}=(v_{j- \lambda (n \mod 2)}^{n})_{j \in  \Z}+(w_{j- \lambda (n \mod2)}^{n})_{j \in  \Z}.$$
\end{lemme}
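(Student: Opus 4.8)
I would prove the identity by induction on $n$, the whole point being to reduce a time step to a statement about the reconstruction alone. The base case $n=0$ is a telescoping computation: the defining recursion adds the increment $u^0_{j+1}-u^0_j$ to exactly one of the two sequences and leaves the other unchanged, so summing from the normalisation $v^0_0=w^0_0=u^0_0=0$ gives $v^0_j+w^0_j=u^0_j$ for every $j$, while $v^0$ is visibly non-decreasing and $w^0$ non-increasing.

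For the inductive step I would write one time step as the composition of the reconstruction with the shift-and-average operation and observe that only the first of these is nonlinear: the second is an integration of a piecewise constant function over the translated cells $\mathcal{C}_{j-\lambda}$, and therefore commutes with sums. It thus suffices to split the reconstruction. Assuming $u^n=v^n+w^n$ at the nodes, I would reconstruct $u^n$ as prescribed by the scheme — in each cell either keeping the value $u^n_j$, or inserting a single discontinuity between $u^n_{j-1}$ and $u^n_{j+1}$ at the abscissa fixed by $u^n$ — and then cut the two layers along exactly the same discontinuities, assigning the matching values of $v^n$ (resp. $w^n$) on each piece. Because $v^n_\ell+w^n_\ell=u^n_\ell$ at every node $\ell$, the two resulting piecewise constant maps $v_{rec}^n$ and $w_{rec}^n$ sum pointwise to $u_{rec}^n$; integrating over the shifted cells and using linearity then yields $u^{n+1}=v^{n+1}+w^{n+1}$. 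The additive identity itself requires nothing beyond this nodal relation and linearity; what must be checked in addition, so that the induction keeps its monotone form and the cut can be run again at the next step, is that $v^{n+1}$ stays non-decreasing and $w^{n+1}$ non-increasing, which I would obtain by an adaptation of the half-cell mean-value argument already used in the monotonicity lemma above.

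The step I expect to be delicate is precisely this compatibility between the reconstruction and the splitting, and it is the place where the nonlinearity bites. The location of the discontinuity in a cell is governed by the ratios of the values of $u^n$, not of $v^n$ or of $w^n$ separately, so one must be careful that the layers are transported through the reconstruction selected for $u^n$ rather than reconstructed on their own; the verification is then cell by cell, the awkward cases being those where $u^n$ has an isolated extremum or a plateau and the scheme inserts no discontinuity even though one of the layers still varies across the cell. A second, genuinely technical point hides in the monotonicity claim: the layer reconstructions $v_{rec}^n$ need not be globally monotone, so the conclusion that their sliding averages are ordered does not follow verbatim from the monotonicity lemma (there the cell average equals the nodal value, which may fail here) and has to be re-established directly. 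This is the real content of the induction, and it is exactly where the non-decreasing, non-increasing structure of $v^n$ and $w^n$ is used.
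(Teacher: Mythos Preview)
Your inductive step does not prove the stated lemma. In the statement, $(v^n)$ and $(w^n)$ are the sequences produced by running the scheme \emph{independently} on the monotone initial data $v^0$ and $w^0$: at each step $v^n$ is reconstructed from its own three-cell stencils and then averaged, and likewise for $w^n$. Your argument instead builds layer reconstructions by slicing $v^n$ and $w^n$ along the discontinuity abscissae selected by $u^n$ --- you say so explicitly (``transported through the reconstruction selected for $u^n$ rather than reconstructed on their own''). Those slices do sum to $u_{rec}^n$ by design, but they are not the scheme's $v_{rec}^n$ and $w_{rec}^n$: the latter place their discontinuity at $(v_j^n-v_{j-1}^n)/(v_{j+1}^n-v_{j-1}^n)$, not at the abscissa coming from $u^n$. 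After averaging you therefore obtain some sequences that sum to $u^{n+1}$ but have no reason to coincide with the scheme's $v^{n+1}$, $w^{n+1}$. Your own remark that the layer reconstructions ``need not be globally monotone'' is a symptom of this mismatch: the scheme's reconstruction of a monotone sequence is always monotone, so the non-monotone objects you are manipulating cannot be the ones the lemma is about.

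The paper proceeds in the only way that can work here: it verifies cell by cell that the scheme's own reconstructions of $v^0$ and $w^0$ sum pointwise to that of $u^0$. The mechanism is that on any window of three consecutive cells, one of $v^0$, $w^0$ is constant while the other differs from $u^0$ by an additive constant; consequently either $v^0$'s reconstruction reproduces $u^0$'s up to a vertical shift and $w^0$'s is flat, or the roles swap, or $u^0$ has a local extremum and all three reconstructions are constant on that cell. Once $u_{rec}^0 = v_{rec}^0 + w_{rec}^0$ holds as functions, linearity of the averaging step gives $u^1 = v^1 + w^1$.
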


\begin{proof}
The proof boils down to show that for all $n$, $u_{rec}^n=v_{rec}^n+w_{rec}^n$. Let us prove it for $n=0$.
We distinguish cases depending on the relative positions of $u_{j-1}^0$, $u_j^0$ and $u_{j+1}^0$.
 \begin{itemize}
  \item If $u_{j-1}^0 = u_{j}^0  = u_{j+1}^0$, the reconstruction on cell $\mathcal{C}_j$ is constant equal to~$u_j^0$. It is clear that $v_{j-1}^0 = v_{j}^0  = v_{j+1}^0$ and $w_{j-1}^0 = w_{j}^0  = w_{j+1}^0$, and thus
  \[ (v_{rec}^0)_{|\mathcal{C}_j} + (w_{rec}^0)_{|\mathcal{C}_j} = v_j^0 + w_j^0 = u_j^0 = (u_{rec}^0)_{|\mathcal{C}_j}. \]
  \item If $u_{j-1}^0 \leq u_{j}^0  \leq u_{j+1}^0$ with one strict inequality, a discontinuity is reconstructed in cell $\mathcal{C}_j$ at a distance $j_j^0=\frac{u_{j}^0-u_{j-1}^0}{u_{j+1}^0-u_{j-1}^0}$ of the right interface and
  \[ (u_{rec}^0)_{|\mathcal{C}_j}(x)= u_{j-1}^0 \mathbf{1}_{d_j^0<j+1/2-x<1} + u_{j+1}^0 \mathbf{1}_{0<j+1/2-x<d_j^0}.\]
  In this case we have
  \[ (v_{j-1}^0,v_j^0, v_{j+1}^0) =  v_j^0-u_j^0 + (u_{j-1}^0,u_j^0, u_{j+1}^0) \]
  and thus $(v_{rec}^0)_{|\mathcal{C}_j}= v_j^0-u_j^0 + (u_{rec}^0)_{|\mathcal{C}_j}$. Moreover $w_{j-1}^0=w_{j}^0=w_{j+1}^0$, thus $(w_{rec}^0)_{|\mathcal{C}_j}= w_j^0$ and the results follows.
  \item  If $u_{j-1}^0 \geq u_{j}^0$ and $u_j^0  \leq u_{j+1}^0$ with one strict inequality, then $d_j^0$ does not belong to $(0,1)$ and thus $(u_{rec}^0)_{|\mathcal{C}_j}=u_j^0$. On the other hand $v_j^0=v_{j+1}^0$ and $w_{j-1}^0=w_j^0$ which yields $(v_{rec}^0)_{|\mathcal{C}_j}=v_j^0$ and $(w_{rec}^0)_{|\mathcal{C}_j}=v_j^0$.
  \item The other cases are treated similarly by exchanging the roles of $v$ and $w$.
 \end{itemize}

\end{proof}

\section{Numerical simulations} \label{simu}
We now give some numerical illustrations of the long time behavior of the scheme and the influence on the parameter $\lambda$.

\subsection{Illustration of Theorem~\ref{thmconv} and Conjecture~\ref{conjconv}}
To begin with, we consider the smooth $1$-periodic initial data defined by
\begin{equation} \label{ID1}
  \forall x \in [0,1], \  u^0(x)= \cos(2 \pi x) \sin(10 \pi x)
\end{equation}
and we compare three classical schemes for the transport equation~\eqref{eq:transport}:
\begin{itemize}
 \item the upwind scheme~\eqref{upwind}, which is linear and first order;
 \item the Lax--Wendroff scheme
 $$u_j^{n+1}=u_j^n-  \dfrac{V\Delta t}{2 \Delta x} (u_{j+1}^n-u_{j-1}^n) +  \dfrac{V^2\Delta t^2}{2 \Delta x^2} (u_{j+1}^n-2 u_j^n+u_{j-1}^n),$$
  which is linear and first order;
 \item the  Despr\'es and Lagouti\`ere scheme explained in the introduction, which is first order and nonlinear;
\end{itemize}
We set $V=1$ and a final time of $T=10$. The space interval $[0,1]$ is discretized with $M$ cells and the time step is related to the space step $\Delta x= 1/M$ by $\Delta t = \frac{0.4 \Delta x}{V}$, which ensures stability and convergence of the schemes (this implies that $\lambda=0.4$). We are interested in the evolution of the $L^\infty$-error
$$ Err(n)= \max_{j\in \{1, \cdots, M\}} \left|u_j^n- u^0(x_j-V n \Delta t)\right|.$$
\begin{figure}[h!] \label{F:castest1}
 \includegraphics[clip=true, trim=3cm 1cm 3cm 1cm, width=\linewidth]{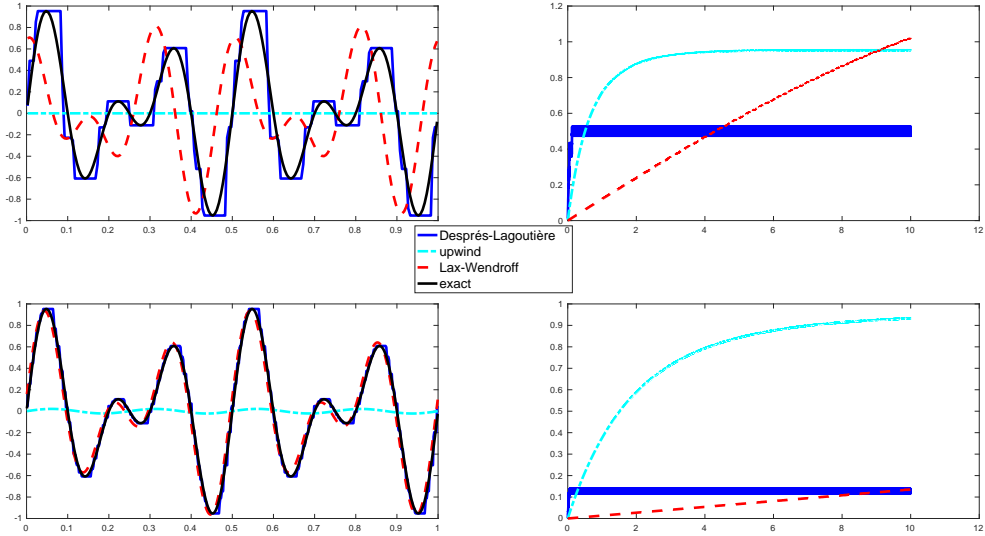}
 \caption{Left: approximate solution after $10$ periods for the initial data~\eqref{ID1} for different schemes with $M=100$ (top, $5 \, 000$ time steps) and $M=600$ (bottom, $15 \, 000$ time steps). Right: evolution of the $L^\infty$-error with time. The blue line looks thicker because it has high frequency oscillations.}
\end{figure}

The results for $M=200$ and $M=600$ are given on Figure~\ref{F:castest1}. We see that the upwind scheme is so diffusive that all oscillations are flatened and the approximate solution is almost constant. The Lax-Wendroff scheme is second order and thus much less diffusive, however the approximate solution is not acceptable for $M=100$. With the reconstruction scheme, stairs appear in the first iterations in time and are then advected exactly. It is the only scheme for which the maximum value does not decrease with time. As expected, the result are better for $M=600$. With finer and finer meshes, we could illustrate the validity of Theorem~\ref{thmconv} on the time interval $[0,15]$. However, whatever the value of $M$ we can reproduce Figure~\ref{F:castest1} by increasing the final time $T$.

\subsection{Influence of $\lambda$}
The initial data is now $1.5$-periodic with
\begin{equation} \label{ID2}
  \forall x \in [-0.3,1.2], \  u^0(x)= \begin{cases}
                                        -1 & \text{ if } -0.3 \leq x \leq 0 \\
                                        \sin(\pi x-\pi/2) & \text{ if } 0 \leq x \leq 1 \\
                                        1 & \text{ if } 1 \leq x \leq 1.2 \\
                                       \end{cases}
\end{equation}
It contains a discontinuity at $x=1.2$ and a smooth part in the interval $[0,1]$.

On Figure~\ref{FSin}, we plot the result at time $22.5$ (which corresponds to $15$ periods) for the original scheme of Despr\'es and Lagouti\`ere described in the introduction and for CFL numbers $\lambda=\frac{V \Delta t}{\Delta x}$  of $0.47$, $0.48$, $0.49$ and $0.5$ the critical value. We took $M=100$ and $V=1$.

\begin{figure}[h!] 
 \includegraphics[clip=true, trim=3cm 0cm 3cm 0cm, width=\linewidth]{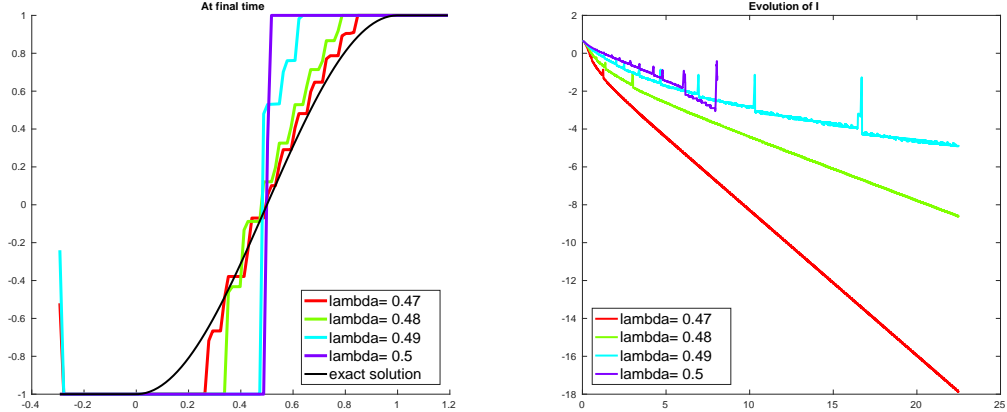}
 \caption{Approximate solution after $15$ periods for the initial data~\eqref{ID2} and for different CFL number, when the grid is fixed. Left: solution at the final time, right: evolution of the quantity $I$ in logarithmic scale.} \label{FSin}
\end{figure}

The final time is large enough to observe the long time behavior of Theorem~\ref{conv1/2} when $\lambda=1/2$: the approximate solution is an Heaviside function. The closest $\lambda$ is close to $1/2$, the fewer steps there is. On the right of this Figure, we plot the quantity
$$ I(n)= \sum_{j=1}^M\min(|u_{j-1}^n-u_j^n|, |u_{j}^n-u_{j+1}^n|,  |u_{j+1}^n-u_{j+2}^n|) $$
(with periodic boundary conditions $u_0^n=u_M^n$, $u_{M+1}^n=u_0^n$ and $u_{M+2}^n=u_1^n$).
This quantity  is null if $(u_j^n)$ is piecewise constant with plateaus of width larger than $3$ cells. Intermediate values between two plateaus are allowed.
It somehow illustrate Proposition~\ref{attractor}.

The results of the same simulation for the related scheme of Section~\ref{notation}, where the grid is shifted alternatively to the left and to the right, are given on Figure~\ref{FSinStag}. We see that the results are more symmetrical and that the convergence is faster.
\begin{figure}[h!] 
 \includegraphics[clip=true, trim=3cm 0cm 3cm 0cm, width=\linewidth]{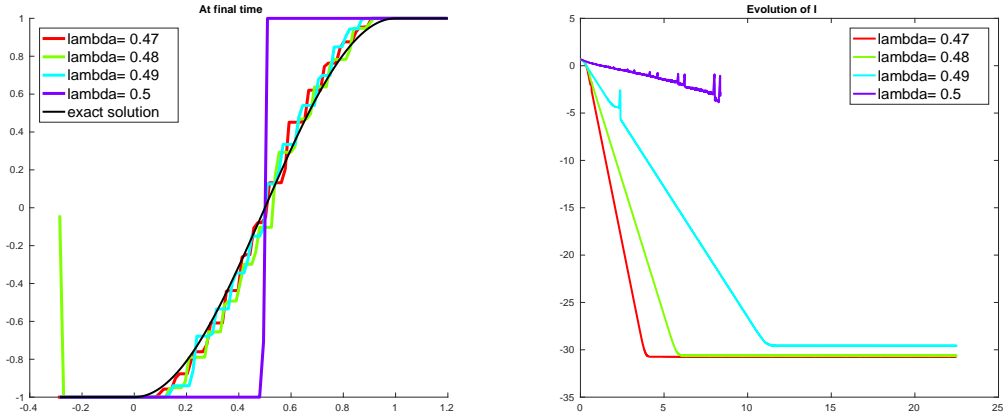}
 \caption{Approximate solution after $15$ periods for the initial data~\eqref{ID2} for different CFL number, when the grid is fixed. Left: solution at the final time, right: evolution of the quantity $I$ in logarithmic scale.} \label{FSinStag}
\end{figure}

\begin{rem}
 The long time behavior observed here for $\lambda=1/2$ is not in contradiction with Theorem~\ref{thmconv}. Indeed for this final time, when the number of cells $M$ is large enough, stairs do not have time to completely merge together and the scheme converges, in the sense that the approximate solution \emph{at time $T$} is closer and closer to the exact one.
\end{rem}

\section{The symmetric case of a half cell shift} \label{lambda=1/2}

In this section we study the long time behavior of the scheme when $\lambda=1/2$ with two types of increasing initial data.

\subsection{Overcompressivity for nonnegative and compactly supported jumps}  \label{overcompressivity}

First we suppose that the initial data consists in a finite succession of strictly positive jumps. We prove that after a finite number of iterations, the numerical solution contains only one intermediate value. Following the notation of Section~\ref{notation}, we denote by $(u_{j+1/2}^1)_{j \in \Z}$ the solution after one step of the scheme, and by $(u_{j}^2)_{j \in \Z}$ the solution after the second time step.

\begin{definition}
The \emph{jumps associated to the solution $(u)$} are defined by the sequence $(S_{j}^n = u_{j+1/2}^n- u_{j-1/2}^n)_{j \in \Z}$ for odd steps $n$, and by sequence $(S_{j+1/2}^n = u_{j+1}^n- u_{j}^n)_{j \in \Z}$ for even steps $n$.
\end{definition}

We are interested in the following class of sequences.

\begin{definition} \label{def:EqHypDL}
Let $\alpha$ be a nonnegative real number and $M \geq 1$ be an integer. The set of \emph{$M$-configurations with inner jumps larger than $\alpha$} is the set of jumps
\begin{equation} \label{EqHypDL}\tag{$H_\alpha^M$}
H_\alpha^M= \left\{
\begin{array}{l}
 (v_j)_{j \in \Z} \in \R^\N \text{ such that } \exists j_0 \in \Z: \\
 \qquad \bullet\ v_j=0 \ \text{ if }  j \le j_0\\
 \qquad \bullet\ v_j=1 \ \text{ if }  j \ge j_0+M\\
 \qquad \bullet\ v_{j_0+1}-v_{j_0}>0 \ \text{ and } v_{j_0+M}-v_{j_0+M-1}>0 \\
 \qquad \bullet\ v_{j+1}-v_{j} > \alpha \quad \forall j \in \{j_0+1, \cdots, j_0+ M-2\} 
\end{array}
\right\}
\end{equation}
and the set of \emph{configurations with inner jumps larger than $\alpha$} is $H_\alpha=\bigcup_{M \in \N} H_\alpha^M$.
\end{definition}

From now on we suppose that the initial data $(u_j^0)_{j \in \Z}$ belongs to $H_\alpha^M$ for some $\alpha>0$ and some integer $M$. The long time behavior follows from the following points.
\begin{itemize}
 \item If the initial data is in $H_\alpha$, then all iterations of it also belong to $H_\alpha$. The case $\alpha=0$ is easy (Lemma~\ref{L:middle}), the case $\alpha>0$ requires a finer analysis of the first and last jumps (Lemma~\ref{L:bord}).
 \item The number $M$ of positive jumps essentially decreases with time.
\end{itemize}

For the sake of simplicity, intermediate results are stated at iteration $n=0$. We start with a useful but simple lemma. 

\begin{lemme} \label{L:simple}
 Consider three adjacent cells $u_{j-1}^0$, $u_{j}^0$ and $u_{j+1}^0$, with $u_{j-1}^0 \leq u_{j}^0 \leq u_{j+1}^0$. Denote by $x \mapsto u_{rec}^0(x)$ the associated reconstruction. 
 \begin{itemize}
  \item If $S_{j-1/2}^0 \geq S_{j+1/2}^0$, then
   \begin{equation} \label{controlLS} \int_{j-1/2}^{j} u_{rec}^0(x) dx =  u_j^0-  \frac{u_{j+1}^0}{2}\quad \text{ and } \quad  \int_{j}^{j+1/2} u_{rec}^0(x) dx= \frac{u_{j+1}^0}{2}.
   \end{equation}
   \item If  $S_{j-1/2}^0 \leq S_{j+1/2}^0$, then
 \begin{equation} \label{controlSL} \int_{j-1/2}^{j} u_{rec}^0(x) dx = \frac{u_{j-1}^0}{2} \quad \text{ and } \quad \int_{j}^{j+1/2} u_{rec}^0(x) dx =  u_j^0-  \frac{u_{j-1}^0}{2}.
 \end{equation}
 \end{itemize}
 In any case we have
 \begin{equation} \label{minmaj}
  \frac{u_{j-1}^0}{2} \leq \int_{j-1/2}^{j} u_{rec}^0(x) dx \leq \frac{u_{j}^0}{2} \quad \text{ and } \quad \frac{u_{j}^0}{2} \leq \int_{j}^{j+1/2} u_{rec}^0(x) dx \leq \frac{u_{j+1}^0}{2}.
 \end{equation}
\end{lemme}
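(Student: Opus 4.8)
The plan is to read off the geometry of the reconstruction $u_{rec}^0$ on the cell $\mathcal{C}_j=(j-1/2,j+1/2)$ and then exploit that the reconstruction preserves the cell mass, so that knowing one half-cell integral immediately yields the other. First I would translate the hypothesis into the location of the reconstructed jump. When $u_{j-1}^0<u_{j+1}^0$, formula \eqref{EqAlpha} gives $d_j^0=\frac{u_j^0-u_{j-1}^0}{u_{j+1}^0-u_{j-1}^0}=\frac{S_{j-1/2}^0}{S_{j-1/2}^0+S_{j+1/2}^0}$, and since $d_j^0$ is measured from the right interface, the jump of $u_{rec}^0$ sits at $p=j+1/2-d_j^0$, with value $u_{j-1}^0$ to its left and $u_{j+1}^0$ to its right. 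The condition $S_{j-1/2}^0\ge S_{j+1/2}^0$ is then exactly $d_j^0\ge 1/2$, i.e. $p\le j$ (the jump lies in the left half), while $S_{j-1/2}^0\le S_{j+1/2}^0$ means $p\ge j$. I would dispose of the degenerate configurations separately, namely $u_{j-1}^0=u_{j+1}^0$, or one of the two jumps vanishing, so that $d_j^0\notin(0,1)$ and $u_{rec}^0$ is constant equal to $u_j^0$ on $\mathcal{C}_j$; in each such case one checks that the two displayed formulas collapse to the correct constant-value answer. This bookkeeping is the only genuinely delicate point.

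Next, in the case $S_{j-1/2}^0\ge S_{j+1/2}^0$, the reconstruction is constant equal to $u_{j+1}^0$ on the entire right half $(j,j+1/2)$, which gives $\int_j^{j+1/2}u_{rec}^0=u_{j+1}^0/2$ at once. Rather than integrate the two pieces of $u_{rec}^0$ on $(j-1/2,j)$ by hand, I would invoke the defining mass-conservation property $\int_{\mathcal{C}_j}u_{rec}^0=u_j^0$ to deduce $\int_{j-1/2}^j u_{rec}^0=u_j^0-u_{j+1}^0/2$, establishing \eqref{controlLS}. The case $S_{j-1/2}^0\le S_{j+1/2}^0$ is completely symmetric: now $u_{rec}^0\equiv u_{j-1}^0$ on the left half, so $\int_{j-1/2}^j u_{rec}^0=u_{j-1}^0/2$, and mass conservation yields $\int_j^{j+1/2}u_{rec}^0=u_j^0-u_{j-1}^0/2$, which is \eqref{controlSL}.

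Finally, for the uniform bounds \eqref{minmaj} I would avoid the case split entirely and argue directly. On the left half $(j-1/2,j)$ the reconstruction takes only the values $u_{j-1}^0$ and $u_{j+1}^0$, hence $u_{rec}^0\ge u_{j-1}^0$ there and $\int_{j-1/2}^j u_{rec}^0\ge u_{j-1}^0/2$; symmetrically $u_{rec}^0\le u_{j+1}^0$ on the right half gives $\int_j^{j+1/2}u_{rec}^0\le u_{j+1}^0/2$. For the two remaining inequalities I would use that $u_{rec}^0$ is nondecreasing, so the average over the left half is at most the average over the right half; combined with $\int_{\mathcal{C}_j}u_{rec}^0=u_j^0$ this forces $\int_{j-1/2}^j u_{rec}^0\le u_j^0/2\le\int_j^{j+1/2}u_{rec}^0$. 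I expect no real obstacle: the statement is a geometric bookkeeping lemma, and the only subtlety is correctly orienting $d_j^0$, which is measured from the right interface, and treating the degenerate cases — which is precisely why funneling everything through mass conservation rather than through explicit integration keeps the argument short and uniform.
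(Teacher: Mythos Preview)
Your proof is correct and follows essentially the same approach as the paper: both identify that the sign of $S_{j-1/2}^0-S_{j+1/2}^0$ determines which half-cell contains the reconstructed discontinuity, then read off the half-cell integrals. Your variations are minor and purely stylistic: you obtain the ``hard'' half-cell integral from mass conservation $\int_{\mathcal{C}_j}u_{rec}^0=u_j^0$ rather than by direct computation, and you derive \eqref{minmaj} from monotonicity of $u_{rec}^0$ uniformly in both cases, whereas the paper reads the bounds off the explicit formulas case by case.
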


As a consequence, the scheme is monotonicity preserving: if the initial sequence $(u_j^0)_{j \in \Z}$ is nondecreasing, so is $(u_{j+1/2}^1)_{j \in \Z}$.

\begin{proof}
The proof is illustrated on Figure~\ref{F:Minimum}. For readability we denote by $a=u_{j-1}^0$,  $b=u_{j}^0$ and $c=u_{j+1}^0$.

If $S_{j-1/2}^0\geq S_{j+1/2}^0$, i.e. if $c-b \leq b-a$, the reconstructed discontinuity lies in the left half cell and its integral is $\frac{b}{2}-  \frac{c-b}{2}  \in \left[ \frac{a}{2},\frac{b}{2} \right]$. On the right half cell the reconstruction is constant equal to $c$.  

If $S_{j-1/2}^0\leq S_{j+1/2}^0$, i.e. if $c-b \geq b-a$, the discontinuity falls in the right half cell and $\int_{j}^{j+1/2} u_{rec}^0(x) dx = \frac{b}{2}+ \frac{b-a}{2} \in \left[  \frac{b}{2} , \frac{c}{2}  \right]$. The reconstruction is equal to $a$ on the left half cell.

 \begin{figure}[h!tp]
 \begin{psfrags}
  \psfrag{a}{$a$}
  \psfrag{b}{$b$}
  \psfrag{c}{$c$}
  \psfrag{d}{$d$}
  \psfrag{a'}{$a'$}
  \psfrag{d'}{$d'$}
  \psfrag{b+c/2}{$\frac{b+c}{2}$}
\psfrag{-1}{}
\psfrag{-1/2}{}
\psfrag{0}{$j-1$}
\psfrag{1/2}{$j-1/2$}
\psfrag{1}{$j$}
\psfrag{3/2}{$j+1/2$}
\psfrag{2}{$j+1$}
 \includegraphics[width=\linewidth]{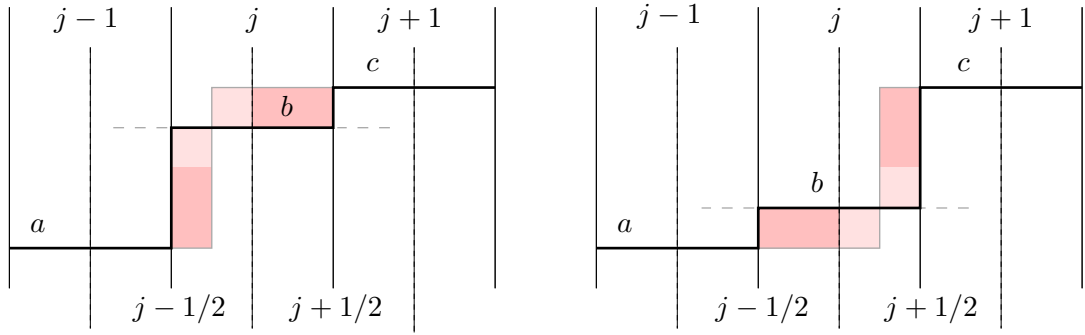}
 \caption{Big jump / small jump (left) or small jump / big jump (right)? Either way, the areas per half cells are easy to compute. The rectangles of the same color have the same area.} \label{F:Minimum}
 \end{psfrags}
\end{figure}
\end{proof}

We now prove that the class $H_\alpha$ of configurations with inner jumps larger than $\alpha$ is preserved by the scheme.

\begin{lemme} \label{L:middle}
If $(u_j^0)_{j \in \Z}$ belongs to $H_0$, so does $(u_{j+1/2}^1)_{j \in \Z}$. More precisely, if the jumps are nonnegative, then
\[ \forall j \in \Z,\ S_{j}^1 \geq \min(S_{j-1/2}^0, S_{j+1/2}^0). \]
\end{lemme}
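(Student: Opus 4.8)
The plan is to compute the new jump $S_j^1$ explicitly from the half-cell integrals provided by Lemma~\ref{L:simple}, exploiting the fact that $\lambda=1/2$ turns every new cell into a union of two old half-cells. First I would record the geometry of the shift: since the grid moves left by $1/2$, the new cell $\mathcal{C}_{j-1/2}=(j-1,j)$ is exactly the right half of the old cell $\mathcal{C}_{j-1}$ glued to the left half of the old cell $\mathcal{C}_j$. Consequently
\[ u_{j-1/2}^1 = \int_{j-1}^{j-1/2} u_{rec}^0 \ud x + \int_{j-1/2}^{j} u_{rec}^0 \ud x, \]
and likewise $u_{j+1/2}^1$ is the right half of $\mathcal{C}_j$ plus the left half of $\mathcal{C}_{j+1}$. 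Each of these four half-cell integrals is one of the quantities computed in Lemma~\ref{L:simple}.

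Next I would substitute these into $S_j^1 = u_{j+1/2}^1 - u_{j-1/2}^1$ and regroup the four terms so that the two halves of the central cell $\mathcal{C}_j$ sit together:
\[ S_j^1 = \Bigl( \int_{j}^{j+1/2} u_{rec}^0 \ud x - \int_{j-1/2}^{j} u_{rec}^0 \ud x \Bigr) + \Bigl( \int_{j+1/2}^{j+1} u_{rec}^0 \ud x - \int_{j-1}^{j-1/2} u_{rec}^0 \ud x \Bigr). \]
I call the first parenthesis the \emph{inner} difference and the second the \emph{outer} difference. The crucial point is that the inner difference is exactly the minimum we are after. Indeed, applying \eqref{controlLS}--\eqref{controlSL} to the cell $\mathcal{C}_j$ (with $a=u_{j-1}^0$, $b=u_{j}^0$, $c=u_{j+1}^0$): when $S_{j-1/2}^0 \geq S_{j+1/2}^0$ the inner difference equals $\frac{c}{2} - (b - \frac{c}{2}) = c - b = S_{j+1/2}^0$, and when $S_{j-1/2}^0 \leq S_{j+1/2}^0$ it equals $(b - \frac{a}{2}) - \frac{a}{2} = b - a = S_{j-1/2}^0$; in both cases the inner difference is precisely $\min(S_{j-1/2}^0, S_{j+1/2}^0)$.

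It then remains to check that the outer difference is nonnegative, and here I would use the coarse bounds \eqref{minmaj} rather than the exact formulas. Applying the right-hand inequality of \eqref{minmaj} to the cell $\mathcal{C}_{j-1}$ gives $\int_{j-1}^{j-1/2} u_{rec}^0 \ud x \leq \frac{u_j^0}{2}$, while the left-hand inequality applied to the cell $\mathcal{C}_{j+1}$ gives $\int_{j+1/2}^{j+1} u_{rec}^0 \ud x \geq \frac{u_j^0}{2}$ (both cells are admissible for Lemma~\ref{L:simple} because the whole sequence is nondecreasing). Subtracting, the outer difference is $\geq 0$, and therefore $S_j^1 \geq \min(S_{j-1/2}^0, S_{j+1/2}^0)$, which is the announced estimate.

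Finally, membership in $H_0$ follows almost for free: nonnegativity of the original jumps forces $S_j^1 \geq 0$, so the new configuration is again nondecreasing, while far to the left (resp. right) the reconstruction $u_{rec}^0$ is identically $0$ (resp. $1$), so the corresponding half-cell integrals return $0$ (resp. $1$) and the new configuration is eventually constant with the same extreme values. Moreover the estimate shows that any new jump straddled by two strictly positive old jumps stays strictly positive, so no interior plateau can appear. The one place that needs a little care — and which I expect to be the only mildly delicate point — is the bookkeeping of the half-integer shift and the verification that the inner difference collapses to the minimum in both jump-ordering cases; everything else is a direct application of Lemma~\ref{L:simple}.
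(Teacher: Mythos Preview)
Your proof is correct and follows essentially the same approach as the paper: both arguments decompose $S_j^1$ into the four half-cell integrals and then feed in the exact formulas \eqref{controlLS}--\eqref{controlSL} on the central cell $\mathcal{C}_j$ together with the coarse bounds \eqref{minmaj} on the neighbouring cells $\mathcal{C}_{j\pm 1}$. Your inner/outer regrouping is a slightly cleaner way to present the same computation --- it makes explicit that the central cell contributes exactly $\min(S_{j-1/2}^0,S_{j+1/2}^0)$ while the outer terms contribute something nonnegative --- but the underlying ingredients and estimates are identical to the paper's.
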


\begin{proof}
We denote by $u_{j-1}^0=a$, $u_{j}^0=b$ and $u_{j+1}^0=c$, so that $S_{j-1/2}^0=b-a$ and $S_{j+1/2}^0=c-b$.

We begin with the case $S_{j-1/2}^n \geq S_{j+1/2}^n$ (Figure~\ref{F:Minimum}, left). Lemma~\ref{L:simple} gives 
\[ \begin{aligned}
    S_j^1
      &= u_{j+1/2}^1- u_{j-1/2}^1 \\
      &= \int_j^{j+1/2} u^0_{rec}(x) \, dx + \int_{j+1/2}^{j+1} u^0_{rec}(x) \, dx \\
      & \qquad - \int_{j-1}^{j-1/2} u_0^{rec}(x) \, dx  - \int_{j-1/2}^j u_0^{rec}(x) \, dx  \\
      & \geq \frac{c}{2} + \frac{b}{2} -  \frac{b}{2} - \left(b-  \frac{c}{2} \right) = c-b = \min(S_{j-1/2}^0, S_{j+1/2}^0)
   \end{aligned}
\]
A similar computation gives the result in the other case. One can also easily see that $S_{j}^1=0$ for any $j\le 0$ or $j\ge M$.
\end{proof}

This proof does not work immediately for $\alpha>0$ because at the left extremity, it only yields to $S_{j_0}^1\geq \min(S_{j_0-1/2}^0, S_{j_0+1/2}^0)$ which may be non zero and $S_{j_0+1}^1\geq \min(S_{j_0+1/2}^0, S_{j_0+3/2}^0)$, which may be smaller than $\alpha$, as an element of $H_\alpha$ does not have any constraint on the first jump $S_{j_0+1/2}^0$.

\begin{lemme} \label{L:bord}
Suppose that the initial data $(u_j^0)_{j \in \Z}$ belongs to $H_\alpha$ for some $\alpha>0$. Then $(u_{j-1/2}^1)_{j \in \Z}$ also belongs to $H_\alpha$. More precisely,
\begin{itemize}
 \item if $0 < S_{j_0+1/2}^0 \leq S_{j_0+3/2}^0$, i.e. if the first jump is smaller than the second one,  then $S_{j_0}^1=0$;
 \item if  $0 < S_{j_0+3/2}^0 < S_{j_0+1/2}^0$, i.e. if the first jump is larger than the second one, then $0 < S_{j_0}^1 \leq S_{j_0+1}^1$.
Moreover, if $S_{j_0+5/2}^0 \geq \alpha$, then $ S_{j_0+1/2}^2 \leq S_{j_0+1/2}^0 - \frac{\alpha}{4}$.
\end{itemize}
\end{lemme}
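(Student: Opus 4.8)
The plan is to translate so that $j_0=0$ and to name the first three jumps $p=S_{1/2}^0$, $q=S_{3/2}^0$, $r=S_{5/2}^0$, so that $u_{-1}^0=u_0^0=0$, $u_1^0=p$, $u_2^0=p+q$, $u_3^0=p+q+r$. Since the left shift sends the new cell $\mathcal{C}_{j-1/2}$ onto $(j-1,j)$, one has $u_{j-1/2}^1=\int_{j-1}^{j}u_{rec}^0=R_{j-1}+L_j$, where $L_k$ and $R_k$ denote the integrals of $u_{rec}^0$ over the left and right halves of $\mathcal{C}_k$; all of these are read off directly from Lemma~\ref{L:simple}. The first two bullets are then immediate. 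On cell $1$ one is in the ``small jump then big jump'' case exactly when $p\le q$, so~\eqref{controlSL} gives $L_1=u_0^0/2=0$ and hence $u_{1/2}^1=u_{-1/2}^1=0$, i.e. $S_{j_0}^1=0$; whereas $p>q$ is the ``big jump / small jump'' case, so~\eqref{controlLS} gives $L_1=u_1^0-u_2^0/2=(p-q)/2$ and thus $S_{j_0}^1=(p-q)/2>0$.

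For the inequality $S_{j_0}^1\le S_{j_0+1}^1$ (second bullet, case $p>q$) I would compute $S_1^1=u_{3/2}^1-u_{1/2}^1=(R_1+L_2)-L_1=q+L_2$, using $R_1=u_2^0/2=(p+q)/2$, and then invoke the lower bound $L_2\ge u_1^0/2=p/2$ from~\eqref{minmaj}. This yields $S_1^1\ge q+p/2\ge (p-q)/2=S_{j_0}^1$, since $q\ge 0$.

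The Moreover part is the crux and requires pushing through the second, right-shifting half step. First I would observe that the half-cell integral formulas of Lemma~\ref{L:simple} are unchanged for the right-shift reconstruction: the discontinuity is now measured from the left interface, but mass conservation forces the same values on each half cell. Since the even-step cell $\mathcal{C}_j$ covers the right half of $\mathcal{C}_{j-1/2}$ and the left half of $\mathcal{C}_{j+1/2}$, writing $\tilde L,\tilde R$ for the half-integrals of $u_{rec}^1$ gives $u_0^2=\tilde R_{-1/2}+\tilde L_{1/2}$ and $u_1^2=\tilde R_{1/2}+\tilde L_{3/2}$. Using $u_{-3/2}^1=u_{-1/2}^1=0$ and the already established $S_0^1\le S_1^1$ to select the relevant cases of Lemma~\ref{L:simple}, these reduce to $\tilde R_{-1/2}=\tilde L_{1/2}=0$ and $\tilde R_{1/2}=u_{1/2}^1=(p-q)/2$, so that $S_{1/2}^2=(p-q)/2+\tilde L_{3/2}$. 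Now I bound $\tilde L_{3/2}\le u_{3/2}^1/2=(p+q)/4+L_2/2$ by~\eqref{minmaj} and bound $L_2$ using $r\ge\alpha$: applying Lemma~\ref{L:simple} to cell $2$, if $q\ge r$ then $L_2=u_2^0-u_3^0/2=(p+q-r)/2\le (p+q-\alpha)/2$, while if $q\le r$ then $L_2=u_1^0/2=p/2\le (p+q-\alpha)/2$, the last step being $q\ge\alpha$, which holds because $S_{j_0+3/2}^0=q$ is an inner jump (the assumption $S_{j_0+5/2}^0\ge\alpha>0$ forces $M\ge 3$). In both subcases $L_2\le (p+q-\alpha)/2$, whence $\tilde L_{3/2}\le (p+q)/2-\alpha/4$ and finally $S_{1/2}^2\le (p-q)/2+(p+q)/2-\alpha/4=p-\alpha/4=S_{j_0+1/2}^0-\alpha/4$.

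The headline membership $(u_{j-1/2}^1)\in H_\alpha$ then follows by combining these left-boundary estimates with Lemma~\ref{L:middle}, which keeps the interior jumps above $\alpha$, together with the analogous (and easier) computation at the right boundary where the solution reaches its upper plateau. The main obstacle is exactly the Moreover part: one must track the configuration through two half-steps and, in bounding $L_2$, exploit the hypothesis $r\ge\alpha$ in one subcase and the inner-jump bound $q>\alpha$ in the other, the latter being what rules out the degenerate small-$M$ configurations.
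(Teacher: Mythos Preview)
Your proof is correct and follows essentially the same route as the paper's: both arguments reduce to reading off the half-cell integrals of $u_{rec}$ from Lemma~\ref{L:simple}, verify $S_{j_0}^1\le S_{j_0+1}^1$ so that the first bullet applies at the second step to give $u_{j_0}^2=0$, and then bound $S_{j_0+1/2}^2$ by a case split on whether $S_{j_0+3/2}^0\lessgtr S_{j_0+5/2}^0$. The only difference is presentation: you work with jump variables $p,q,r$ and the shorthand $L_k,R_k$, and you merge the two subcases into a single inequality $L_2\le (p+q-\alpha)/2$ before concluding, whereas the paper keeps the values $a=u_{j_0+1}^0,\,b=u_{j_0+2}^0,\,c=u_{j_0+3}^0$ and carries each subcase through to the end separately (obtaining $S_{j_0+1/2}^2\le a-(b-a)/4$ in one case and $S_{j_0+1/2}^2\le a-(c-b)/4$ in the other).
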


\begin{proof}
First of all, from the previous lemma and Hypothesis~\eqref{EqHypDL} we have 
\begin{equation} \label{away}
 \forall j \in \{ j_0+1, \cdots, j_0+M-1 \}, \ S_j^1  \geq \alpha.
\end{equation}
We easily see that $u^0_{rec}=0$ on $(- \infty, j_0+1/2)$ and $u^0_{rec}=1$ on $(j_0+M-1/2, + \infty)$. It yields
$$ \forall j \leq j_0-1 \ \text{ and } \ \forall j \geq j_0+M+1, \ S_j^1=0. $$

We now focus on the two jumps near the left extremities, the results extending trivially to the right extremity. First, if $0 < S_{j_0+1/2}^0 \leq S_{j_0+3/2}^0$, then the reconstruction in $\mathcal{C}_{j_0+1}$ lies in the right half of the cell $\left[ j_0+1, j_0+\frac{3}{2} \right]$. The reconstruction $u^0_{rec}$ is null on $\left[j_0+\frac{1}{2}, j_0+1 \right]$, so $u_{j_0+1/2}^1=0$  and $S_{j_0}^1=0$. The first inner jump is $S_{j_0+2}^1$ and is larger than $\alpha$ by~\eqref{away}.

\begin{figure}[h!tp]
\begin{psfrags}
\psfrag{s-1}{$S_{-1}^0$}
\psfrag{s0}{$S_0^0$}
\psfrag{s1}{$S_1^0$}
\psfrag{s2}{$S_2^0$}
\psfrag{s3}{$S_3^0$}
\psfrag{s-1/2}{$S_{-1/2}^1$}
\psfrag{s1/2}{$S_{1/2}^1$}
\psfrag{s3/2}{$S_{3/2}^1$}
\psfrag{s5/2}{$S_{5/2}^1$}
\psfrag{s-1'}{$S_{-1}^2$}
\psfrag{a}{$a$}
\psfrag{b}{$b$}
\psfrag{c}{$c$}
\psfrag{d}{$d$}
\psfrag{a/2-S2/2}{$\frac{2a-b}{2}$}
\psfrag{3b-c/2}{$\frac{3b-c}{2}$}
\psfrag{a+b/2}{$\frac{a+b}{2}$}
\psfrag{>=a/4}{$ \geq \frac{\alpha}{4}$}
\psfrag{0}{$j_0$}
\psfrag{00}{$0$}
\psfrag{1}{$j_0+1$}
\psfrag{2}{$j_0+2$}
\psfrag{3}{$3$}
\psfrag{1/2}{$j_0+\frac{1}{2}$}
\psfrag{3/2}{$j_0+\frac{3}{2}$}
\psfrag{5/2}{$j_0+\frac{5}{2}$}
\psfrag{>}{$\alpha$}
\psfrag{area1}{area $\leq \frac{a+b}{4}$}
\psfrag{area2}{area $\leq \frac{3b-c}{4}$}
\psfrag{u^0_rec}{$\textcolor{mypurple}{u_{rec}^0}$}
\psfrag{u^1_rec}{$\textcolor{mypurple}{u_{rec}^1}$}
\psfrag{u^0}{$\textcolor{mygreen}{u^0}$}
\psfrag{u^1}{$\textcolor{mygreen}{u^1}$}
 \includegraphics[width=\linewidth]{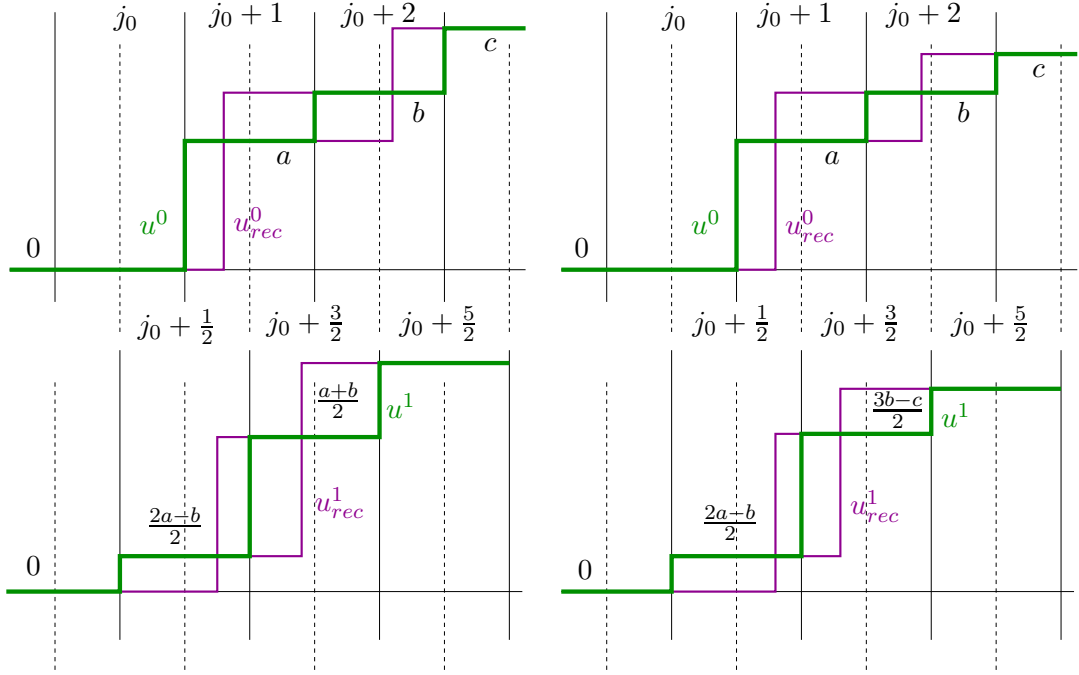}
 \caption{Behavior at the left extremity when the first jump is larger than the second one, depending on the relative sizes of the second and third jumps.} \label{F:LeftExtremity}
\end{psfrags}
\end{figure}

We now focus on the second case $S_{j_0+1/2}^0 > S_{j_0+3/2}^0 \geq \alpha$. In this case, one has on the one hand $S_{j_0}^1>0$ and on the other hand, by Lemma~\ref{L:middle}, $S_{j_0+1}^1 \geq \min(S_{j_0+1/2}^0, S_{j_0+3/2}^0) \geq \alpha$. Hence, $(u_{j-1/2}^1)_j\in H_\alpha$, and it remains to prove the bound about the second iteration. We will do it by a case disjunction, in each case it will follow from a simple computation.

We denote as usual $u_{j_0+1}^0=a$, $u_{j_0+2}^0=b$ and $u_{j_0+3}^0=c$. The elements of proof are illustrated on Figure~\ref{F:LeftExtremity}. Suppose first that the second jump $S_{j_0+3/2}^0$ is smaller than the third jump $S_{j_0+5/2}^0$ (Figure~\ref{F:LeftExtremity}, left). Using Lemma~\ref{L:simple} we obtain
\[ u_{j_0-1/2}^1=0, \ u_{j_0+1/2}^1=\frac{2a-b}{2}\ \text{and} \  \ u_{j_0+3/2}^1= \frac{a+b}{2}. \] 
It follows that
\[ S_{j_0}^1 =  \frac{2a-b}{2} < \frac{a}{2} \ \text{ and } \ S_{j_0+1}^1 = \frac{a+b}{2} - \frac{2a-b}{2} = \frac{2b-a}{2} > \frac{a}{2}.\]
We thus have $S_{j_0}^1< S_{j_0+1}^1$, which implies that $u_{j_0}^2=0$ and $S_{j_0-1/2}^2=0$. We can bound the jump in $j_0+1/2$, using once again Lemma~\ref{L:simple}:
\[ \begin{aligned}
    S_{j_0+1/2}^2 & = u_{j_0+1}^2-u_{j_0}^2 = u_{j_0+1}^2 \\
      &= \int_{j_0+1/2}^{j_0+1} u_{rec}^{1}(x) \, dx + \int_{j_0+1}^{j_0+3/2} u_{rec}^1(x) \, dx \\
      & \leq \frac{2 u_{j_0+1/2}^1-u_{j_0-1/2}^1}{2} + \frac{u_{j_0+3/2}^1}{2} \hspace{2cm}  \text{with~\eqref{controlSL} and~\eqref{minmaj}} \\
      & = a- \frac{b-a}{4}  \leq S_{j_0+1/2}^0 - \frac{\alpha}{4}.
   \end{aligned}
\]

To conclude, we treat the case where the third jump $S_{j_0+5/2}^0$ is smaller than the second jump $S_{j_0+3/2}^0$ (Figure~\ref{F:LeftExtremity}, right). We still have $u_{j_0+1/2}^1=\frac{2a-b}{2}$, and using Lemma~\ref{L:simple}, first case both on $\left[j_0+1, j_0+\frac{3}{2} \right]$ and $\left[j_0+\frac{3}{2}, j_0+2 \right]$ we obtain $u_{j_0+3/2}^1= \frac{3b-c}{2}$. It follows that
\[ S_{j_0}^1=  \frac{2a-b}{2} \ \text{ and } \ S_{j_0+1}^1 = \frac{4b-2a-c}{2}.\]
The second jump is larger than the first jump:
\[ S_{3/2}^0\geq S_{5/2}^0 \ \Longrightarrow \ 4(b-a)>(c-b) \ \Longrightarrow \   \frac{4b-2a-c}{2} \geq \frac{2a-b}{2}. \]
Thus the first point of Lemma~\ref{L:bord} gives $u_{j_0}^2=0$ and $S_{j_0-1/2}^2=0$. Eventually we compute $S_{j_0+1/2}^2= u_{j_0+1}^2$. Using once again~\eqref{controlSL} and~\eqref{minmaj} we obtain
\[ \int_{j_0+\frac{1}{2}}^{j_0+ 1} u_{rec}^1 = u_{j_0+1/2}^1=\frac{2a-b}{2} \ \text{ and } \ \int_{j_0+ 1}^{j_0+\frac{3}{2}} u_{rec}^1 \leq \frac{u_{j_0+3/2}^1}{2}= \frac{3b-c}{4}. \]
We end up with
\[S_{j_0+1/2}^2\leq \frac{4a+b-c}{4}=a - \frac{c-b}{4} \leq S_{j_0+1/2}^0 -\frac{\alpha}{4},\]
which concludes the proof.

\end{proof}

We are now in position to prove the following theorem. It states that  after a finite number of iterations, the process described in Section~\ref{notation} is $2$-periodic and at each time step, the sequence contains at most one intermediate value.

\begin{theoreme} \label{thmover}
Suppose that the initial data $(u_j^0)_{j \in \Z}$ belongs to $H_\alpha^M$ for some $\alpha>0$ and some integer $M>0$. Then there exists an integer $p=p(M,\alpha)$ such that for all $n \geq p$, $(u_j^n)_{j \in \Z}$ is in $H^2_\alpha$ or $H^1_\alpha$, and the solution is $2$-periodic: for all $n \geq p$,  $u_j^{n+2}=u_j^n$.
\end{theoreme}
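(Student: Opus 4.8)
The plan is to combine the invariance of the family $H_\alpha$ under the scheme with a monotonicity argument forcing the ``width'' $M$ of the transition region down to $1$ or $2$, after which a direct computation yields the $2$-periodicity. First I would record the invariance: by Lemmas~\ref{L:middle} and~\ref{L:bord}, every iterate of an element of $H_\alpha$ stays in $H_\alpha$, and the inner jumps stay $>\alpha$. The structural consequence I want is that no jump can be created, destroyed or merged in the interior of the transition region; the only place the number of positive jumps can change is at the two extremities. So it suffices to follow the dynamics of the first jump and, symmetrically, of the last jump.

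The engine is the absorb-or-shrink dichotomy contained in Lemma~\ref{L:bord}, which I would run on the two-step (even-to-even) map, so as to match the two-step statement of that lemma. At a given extremity, either (i) the extremal jump is no larger than its neighbour, so it is absorbed ($S_{j_0}^1=0$) and $M$ drops by one; or (ii) the extremal jump is strictly larger than its neighbour, and Lemma~\ref{L:bord} gives $S_{j_0+1/2}^2\le S_{j_0+1/2}^0-\alpha/4$, provided the ``third jump'' seen from that extremity is $\ge\alpha$. For $M\ge 4$ this third jump is an inner jump, hence automatically $\ge\alpha$; the delicate case is $M=3$. Writing $J_1,J_2,J_3$ for the three jumps (with $J_2\ge\alpha$), I would argue: if $J_1\le J_2$ absorb on the left, if $J_3\le J_2$ absorb on the right, and otherwise $J_1>J_2$ and $J_3>J_2\ge\alpha$, so the shrink bound applies to $J_1$. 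Thus for every $M\ge 3$, each two-step iteration either strictly decreases $M$ or shrinks an extremal jump by at least $\alpha/4$. Since the extremal jumps are bounded above by the total height of the profile, regime (ii) can repeat only boundedly many times before regime (i) is forced; hence $M$ reaches $\{1,2\}$ after a number of steps $p=p(M,\alpha)$ of order $M/\alpha$.

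Finally, once $u^n\in H_\alpha^1\cup H_\alpha^2$ there are no inner jumps, the shrink mechanism can no longer decrease $M$, and I would compute the two steps of the scheme explicitly using Lemma~\ref{L:simple}. For a single jump the even iterates are a discrete Heaviside and the odd iterates a Heaviside carrying the midpoint value $1/2$, which is a genuine $2$-periodic orbit. For two jumps with intermediate value $m$, the reconstruction is a single Heaviside placed at distance $m$ inside a cell; tracking it through the left shift and then the right shift returns the configuration to itself, the borderline case $m=1/2$ collapsing to a single jump (again $2$-periodic). This gives $u^{n+2}=u^n$ for $n\ge p$.

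The hard part will be the width-decrease step. Two points require care. The single-step width is \emph{not} monotone: a half-cell shift transiently creates or destroys a midpoint, as the Heaviside orbit above already shows, so monotonicity must be phrased on the two-step return map to the integer grid rather than step by step. Moreover, one must verify that while an extremal jump is being shrunk over successive two-step iterations, the rest of the configuration remains in $H_\alpha$ with its inner jumps $>\alpha$, so that the dichotomy can be reapplied until absorption occurs; this is precisely where the invariance Lemmas~\ref{L:middle} and~\ref{L:bord} and the uniform lower bound $\alpha$ on the inner jumps are essential.
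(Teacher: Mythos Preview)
Your approach is the same as the paper's in spirit: invariance of $H_\alpha$ via Lemmas~\ref{L:middle} and~\ref{L:bord}, then force $M$ down using the $\alpha/4$ shrink of Lemma~\ref{L:bord}, and finally check $2$-periodicity for $M\le 2$ by direct computation. The paper organises the bookkeeping differently, tracking both extremities simultaneously through a single-step four-state automaton (the states LS/SL, SL/LS, SL/SL, LS/LS of Figure~\ref{F:Automate}) rather than your per-extremity two-step dichotomy; it then observes that $M$ drops each time the state SL/LS is left, and that the only cycles avoiding this state have an extremity repeatedly in configuration~L, to which the $\alpha/4$ bound applies.

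There is one genuine gap in your formulation. Your dichotomy ``(i) absorb, $M$ drops; (ii) shrink by $\alpha/4$'' is a \emph{single-step} alternative at a given extremity, but you then assert ``each two-step iteration either strictly decreases $M$ or shrinks an extremal jump by $\alpha/4$''. This does not follow when an extremity is in case~(i) at the even time but in case~(ii) at the next odd time: over the even-to-even map the boundary at that extremity returns to its original position (absorb then re-emit), so $M$ is unchanged there, yet Lemma~\ref{L:bord}'s $\alpha/4$ bound compares the first jump at the odd time to the first jump two steps later, not the even-time first jumps. In the paper's automaton this is exactly the cycle SL/LS $\leftrightarrow$ LS/SL (and analogously SL/SL $\leftrightarrow$ LS/LS), where the L-configuration---and hence the shrink---sits at the \emph{odd} parity. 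The fix is simple: do not pin the parity. At each extremity, L is always followed by S, so consecutive L-times are two steps apart; apply the $\alpha/4$ shrink between those L-times (whatever their parity). After at most $\lceil 4/\alpha\rceil$ such L-times the extremal jump drops below $\alpha\le$ (adjacent inner jump), forcing configuration~S thereafter and hence genuine absorption. With this adjustment your argument goes through and is equivalent to the paper's.
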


\begin{proof} 
All along the proof, we make a slight abuse of notation and drop distinction between odd en even iteration in time, always denoting $(u_j^n)_{j \in \Z}$. We do not wish to systematically distinguish between the two cases, which would only makes the notation heavier.

We know by Lemma~\ref{L:bord} that for all iteration in time $n$, there exists an integer $M^n$ such that $(u_j^n)_{n \in \Z}$ belongs to $H_\alpha^{M^n}$. Note that if $M^n=1$ or $M^n=2$, $\alpha$ plays no role in the definition of $H_\alpha^{M}$.
We prove that the number $M^n$ of strictly positive jumps at iteration $n$ reaches~$2$ in a finite number of iterations. If $M^n \leq 2$, then $(u_j^n)_{j \in \Z}$ has only one intermediate value, thus $u_{rec}$ contains a single discontinuity and it is easily seen that this form is $2$-periodic (this is a special case of~\cite{DL02}, Theorem 3).

We now estimate the number $M^n$ of non zero jumps. Following Definition~\ref{def:EqHypDL}, we denote by $j_0^n$ the last cell where $u_j^n$ is null and recall that the first cell where $u_j^n$ is $1$ is $j_0^n+M^n$. Depending on whether the first jump is larger or smaller than the second one, Lemma~\ref{L:bord} gives the relation between $j_0^{n+1}$ and $j_0^n$.
With a similar argument at the right extremity we deduce that $M^{n+1}$ is equal to $M^n+1$,  $M^n$ or  $M^n-1$. 

There are four cases that can occur at the extremities. In what follows, L stands for ``large jump'' and S stands for ``small jump'' (relatively to each other). The elements of proof are gathered on Picture~\ref{F:Automate}, where the points represents the jumps. The inner jumps (in grey) are all larger than $\alpha$. 
\begin{figure}[h!tp]
 \begin{psfrags}
\psfrag{-1}{$M^{n+1}=M^n-1$}
\psfrag{0}{$M^{n+1}=M^n$}
\psfrag{+1}{$M^{n+1}=M^n+1$}
\psfrag{a}{\textcolor{red}{$\alpha$}}
\psfrag{a/4}{\textcolor{red}{$\frac{\alpha}{4}$}}
\psfrag{F}{Finite}
\psfrag{LS...SL}{LS/SL}
\psfrag{LS...LS}{LS/LS}
\psfrag{SL...SL}{SL/SL}
\psfrag{SL...LS}{SL/LS}
 \includegraphics[width=\linewidth]{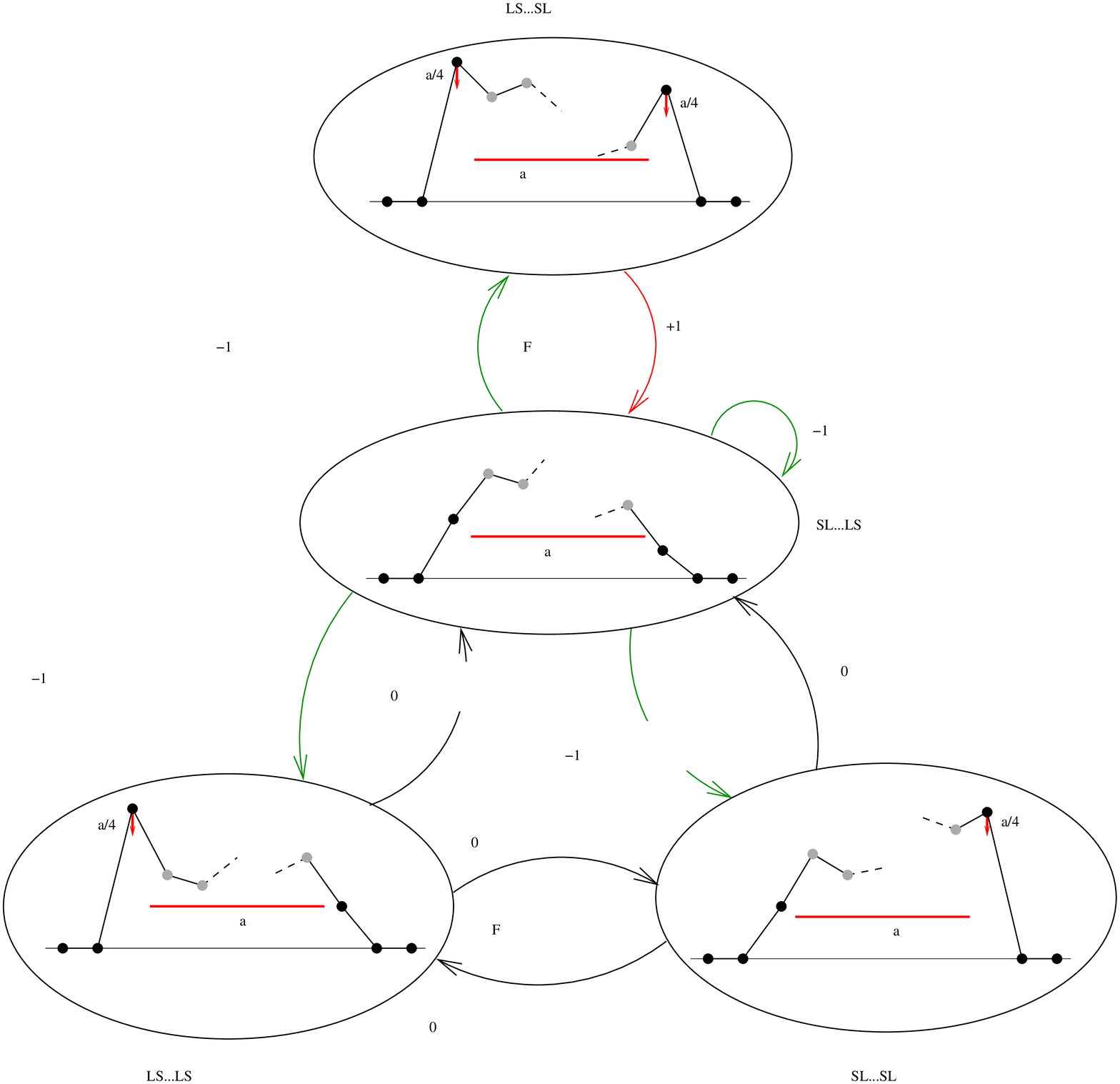}
 \caption{Transitions between the four possible configurations at the extremities and evolution of the number $M^n$ of non null jumps.} \label{F:Automate}
 \end{psfrags}
\end{figure}

\begin{itemize}
 \item \textbf{Case LS/SL, Fig.~\ref{F:Automate}, top:} in that case
 \[ S_{j_0^n+1/2}^n>S_{j_0^n+3/2}^n \ \text{ and } \ S_{j_0^n+M^n-3/2}^n<S_{j_0^n+M^n-1/2}^n, \]
hence (Lemma~\ref{L:bord}, second case)
\[ 0< S_{j_0^{n}}^{n+1} \leq S_{j_0^{n}+1}^{n+1}\ \text{ and } \ 0< S_{j_0^{n}+M^{n}+1}^{n+1} \leq S_{j_0^{n}+M^{n}}^{n+1}.\]
Thus the number of non zero jumps increases by $1$: $M^{n+1}=M^n+1$ and the solution at time $n+1$ is in configuration SL/LS.
 \item \textbf{Case SL/LS, Fig.~\ref{F:Automate}, middle:} in that case
 \[ S_{j_0^n+1/2}^n\leq S_{j_0^n+3/2}^n \ \text{ and } \ S_{j_0^n+M^n-3/2}^n \geq S_{j_0^n+M^n-1/2}^n. \]
 By Lemma~\ref{L:bord}, first case,
 \[  S_{j_0^{n}}^{n+1}=0  \ \text{ and } \ S_{j_0^{n}+M^{n}+1}^{n+1}=0.\]
The number of non zero jumps decreases by $1$:  $M^{n+1}=M^n-1$.  
We do not have any information on  the relative positions of the first and second jump (neither on last and second last) at the iteration $n+1$, and actually the four configurations LS/SL, LS/SL, LS/LS and SL/SL are possible.
 \item \textbf{Case SL/SL, Fig.~\ref{F:Automate}, bottom right:}  in that case
 \[ S_{j_0^n+1/2}^n\leq S_{j_0^n+3/2}^n \ \text{ and } \ S_{j_0^n+M^n-3/2}^n \leq S_{j_0^n+M^n-1/2}^n \]
 and
 \[ 0= S_{j_0^{n}}^{n+1}\ \text{ and } \ 0< S_{j_0^{n}+M^{n}+1}^{n+1} \leq S_{j_0^{n}+M^{n}}^{n+1}.\]
 It follows that $M^{n+1}=M^n$.  At the next iteration in time, we only have information on the last two jumps, so it is possible to end up in situations LS/LS or SL/LS.
 \item \textbf{Case LS/LS, Fig.~\ref{F:Automate}, bottom left:} in that case
 \[ S_{j_0^n+1/2}^n>S_{j_0^n+3/2}^n \ \text{ and } \ S_{j_0^n+M^n-3/2}^n\geq S_{j_0^n+M^n-1/2}^n. \]
 and
\[ 0< S_{j_0^{n}}^{n+1} \leq S_{j_0^{n}+1}^{n+1}\ \text{ and } \ 0= S_{j_0^{n}+M^{n}+1}^{n+1} .\] 
The number of non zero jump remains unchanged $M^{n+1}=M^n$. At time $n+1$, it is possible to be in cases SL/SL and SL/LS.
\end{itemize}

Looking at the transitions between the four possible situations on Figure~\ref{F:Automate}, we see that the number of jumps decreases of $1$ each time Case SL/LS is left. It remains to prove that this is the most frequent case, i.e.that it is not possible to cycle indefinitely from case SL/LS to case LS/SL or from case SL/SL to case LS/LS. It follows from the fact that if $S_{j_0^n+1/2}^n>S_{j_0^n+3/2}^n$, then $S_{j_0^{n+2}+1/2}^{n+2}\leq S_{j_0^n+1/2}^n - \frac{\alpha}{4}$, see Lemma~\ref{L:bord}. Thus, the left extreme value decreases of $\frac{\alpha}{4}$ after one cycle, and in particular will be smaller than $\alpha$ after a finite number of cycles (smaller than $2/\alpha$). At this stage, it is necessary smaller than the first inner jump in $S_{j_0^n+3/2}^n$ which is larger than $\alpha$, thus the solution exits the cycle and falls in situation SL/LS. This concludes the proof.

\end{proof}

\subsection{Half infinite staircase with steps of equal heights}

We now consider staircase-like initial data. It once again illustrates the importance of the behavior at extremities. We say that $(u_j^n)_j$ satisfies Hypothesis~(H') if, up to a horizontal translation,
\begin{enumerate}
\item  $(u_j^n)_j$ is constant on $\{j\le 0\}$, i.e. for any $j\le 0$, $S_{j-1/2}^n=0$;
\item for any $j\ge 3$, we have $S_{j-1/2}^n = 1$;
\item $S_{3/2}^n\ge 1$;
\item $S_{1/2}^n\ge 0$.
\end{enumerate}
As usual, we here considered an even time $n$, the odd case being identical (up to a shift of $1/2$ in the notations).

The following proposition expresses that if the initial condition satisfies Hypothesis~(H'), then the solution will satisfy Hypothesis~(H') at all time, and the total height of the two first steps will tend to infinity as the time goes to infinity.

\begin{prop}\label{PropEscalier}
If $(u_j^n)_j$ satisfies Hypothesis~(H'), then so does $(u_{j-1/2}^{n+1})_j$. Moreover, one has, $S_1^n+S_2^n\to\infty$.
\end{prop}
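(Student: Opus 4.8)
The plan is to reduce the whole statement to the study of a two-dimensional piecewise affine map governing the two ``transition jumps'' at the left extremity. Normalizing $j_0=0$, a configuration of type~(H') is entirely determined by its far field (zero for $j\le 0$, unit steps far to the right) together with the pair $(s,t)=(S_{1/2}^n,S_{3/2}^n)$, subject to $s\ge 0$ and $t\ge 1$; the quantity to control is exactly $p:=s+t$, which is $S_1^n+S_2^n$ at odd times and $S_{1/2}^n+S_{3/2}^n$ at even times. First I would compute one time step explicitly. Since $\lambda=\tfrac12$, each new value is $u^{n+1}_{j-1/2}=\int_{j-1}^{j}u^n_{rec}$, i.e. the sum of a right half-cell integral and a left half-cell integral, both read off from Lemma~\ref{L:simple}, \eqref{controlLS}--\eqref{minmaj}. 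Applying this cell by cell near the left extremity (and noting that the reconstruction vanishes on $(-\infty,\tfrac12)$ and has slope $1$ far to the right) the only genuine alternative is whether cell $1$ is reconstructed to the left or to the right, i.e. whether $s\le t$ or $s>t$; cells $j\ge 2$ are always in the large-jump/small-jump case because $t\ge 1$.

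This produces two explicit affine maps. In the case $s\le t$ I expect to find that the support does not move and the new transition jumps are $(s',t')=\bigl(\tfrac{3s+t-1}{2},\tfrac{t-s+2}{2}\bigr)$, with $s'+t'=s+t+\tfrac12$. In the case $s>t$ the occupied cells move one unit to the left, so after translating back to the normalization $j_0=0$ I expect $(s',t')=\bigl(\tfrac{s-t}{2},\tfrac{s+3t-1}{2}\bigr)$ and $s'+t'=s+t-\tfrac12$. A one-line check that $s'\ge 0$ and $t'\ge 1$ in both cases (using $t\ge 1$), together with the fact that the far-field unit-step structure is reproduced verbatim so that only two transition jumps ever appear, proves the preservation part of the proposition. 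Since the odd step is identical up to the $\tfrac12$-shift in notation, the same map drives every iteration.

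It then remains to prove $p\to\infty$. Writing $q:=t-s$, the two maps become $(p,q)\mapsto(p+\tfrac12,\;q-p+\tfrac32)$ when $q\ge 0$ (call it step $A$) and $(p,q)\mapsto(p-\tfrac12,\;q+p-\tfrac12)$ when $q<0$ (step $B$). The structural facts I would extract are: (i) a $B$ step always lands in $q\ge 0$, since there $q'=2t-\tfrac12\ge\tfrac32>0$, so no two $B$'s are consecutive, whence each $B$ is preceded by a distinct $A$ and $p_n=p_0+\tfrac12(\#A-\#B)$; and (ii) an $A$ step can be followed by a $B$ step only if its output satisfies $q-p+\tfrac32<0$, that is $q<p-\tfrac32$. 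I would then rule out eventual perfect alternation $ABAB\cdots$: a direct computation shows one $AB$ block acts as $(p,q)\mapsto(p,q+\tfrac32)$, so during alternation $p$ is frozen while $q\to+\infty$, eventually violating the condition $q<p-\tfrac32$ required to keep producing $B$'s. Hence alternation is self-defeating, there are infinitely many $A$ steps not immediately followed by a $B$, each raising $p$ by $\tfrac12$ with no compensating decrease, so $\#A-\#B\to\infty$ and $p=S_1^n+S_2^n\to\infty$.

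The main obstacle is twofold. First, the bookkeeping at the left extremity: the case $s>t$ shifts the occupied cells, so one must handle the translation carefully even to state the recurrence, and must confirm that exactly two anomalous jumps survive while all others remain equal to $1$. Second, and more delicate, is the dynamical conclusion: naive counting of $A$'s against $B$'s only yields that $p$ is essentially non-decreasing, and genuine growth hinges on the observation that the $p$-conserving regime (perfect $AB$ alternation) drives $q$ out of the very region in which $B$ steps are possible, which is the crux of the argument.
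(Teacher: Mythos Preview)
Your proposal is correct and follows essentially the same route as the paper: both reduce to the explicit two-case affine recursion for the pair of transition jumps, obtain the same formulas (your case $s\le t$ is the paper's case~(ii), your $s>t$ is case~(i)), observe that a $B$ step is always followed by an $A$ step, and then kill eventual $AB$ alternation by the block computation $(p,q)\mapsto(p,q+\tfrac32)$ (the paper states the equivalent $S_1^{n+2}-S_2^{n+2}=S_1^n-S_2^n-\tfrac32$). Your $(p,q)$ packaging is a tidy way to phrase it, but the content is the same.
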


This proposition is illustrated on Figure~\ref{F:Escalier05}: the first step falls down at each iteration, until it disappears.

\begin{figure}[h!tp]
\begin{center}
 \includegraphics[width=0.7\linewidth]{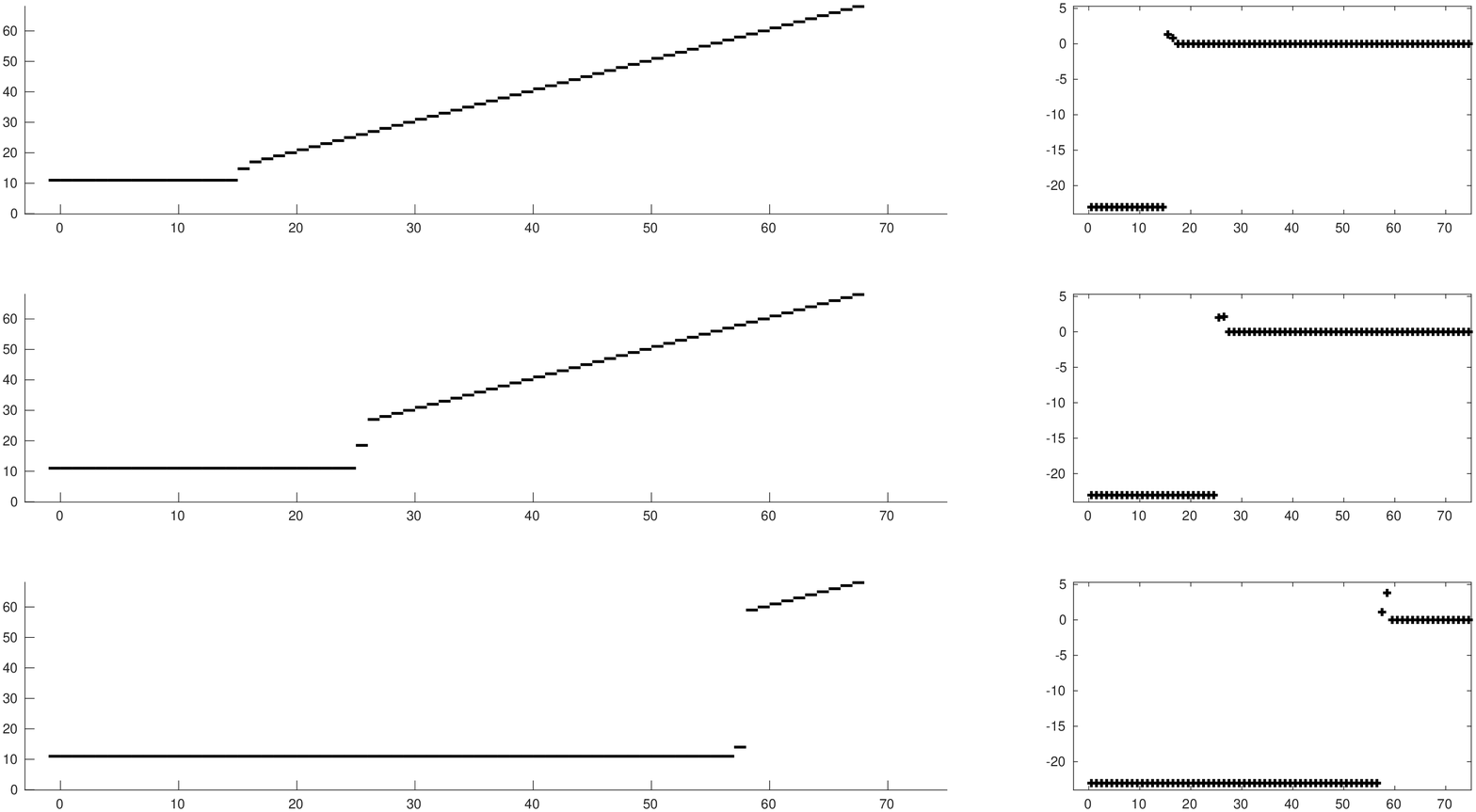}
 \caption{Numerical solution after $100$ iterations (light gray) to $800$ iterations (dark gray) when the initial data is a half infinite staircase} \label{F:Escalier05}
 \end{center}
\end{figure}

\begin{proof}
We first prove the first part of the proposition.

Of course, Hypothesis~(1) of (H') is still satisfied at time $n+1$. As $S_{5/2}^n=1$, Equation \eqref{EqAlpha} gives:
\[d_1^n = \frac{S_{3/2}^n}{S_{1/2}^n+S_{3/2}^n} \qquad \text{and} \qquad d_2^n = \frac{1}{1+S_{3/2}^n}\]
By Hypothesis~(3) of (H'), we have $d_2^n\le 1/2$. Then the expression of the reconstruction depends on the sign of $d_1^n - 1/2$. We have two cases:
\begin{enumerate}[(i)]
\item $d_1^n\le 1/2 \iff S_{1/2}^n\ge S_{3/2}^n$. In this case, a computation leads to
$$ u_{1/2}^{n+1}=\frac{S_{1/2}^n-S_{3/2}^n}{2}, \qquad u_{3/2}^{n+1}=u_2^n-\frac{1}{2},$$
and for $j \geq 2$, $u_{j+1/2}^{n+1}=\frac{u_j^n+u_{j+1}^n}{2}$.
So Hypothesis~(2) of (H') is satisfied at step $n+1$. Moreover
\[S_0^{n+1} =  \frac{S_{1/2}^n-S_{3/2}^n}{2} \ \text{ and } \ S_1^{n+1} = \frac{3S_{3/2}^n+S_{1/2}^n-1}{2}\]
and using that $S_{1/2}^n\ge S_{3/2}^n \ge 1$, we get that $S_1^{n+1} \ge 3/2$, so Hypothesis~(3) is satisfied at step $n+1$.

Remark that in this case, we have 
\begin{equation}\label{eqUndeux}
S_0^{n+1}-S_1^{n+1} = \frac{1-4S_{3/2}^n}{2}\le -\frac32,
\end{equation}
so $S_1^{n+1}<S_2^{n+1}$. In other words if case (i) occurs at time $n$, then it occurs case (ii) at time $n+1$.
\item $d_1^n\ge 1/2 \iff S_{1/2}^n\le S_{3/2}^n$. In this case, 
$$ u_{1/2}^{n+1}=0, \qquad u_{3/2}^{n+1}=\frac{2 u_1^n+u_2^n-1}{2},$$
and for $j \geq 2$, $u_{j+1/2}^{n+1}=\frac{u_j^n+u_{j+1}^n}{2}$. Hypothesis~(2) is immediately satisfied.
Thus we have
$$
S_1^{n+1}  = \frac{3S_{1/2}^n+S_{3/2}^n-1}{2} \ \text{ and } S_2^{n+1} = \frac{S_2^n-S_1^n}{2}+1\ge 1 $$ 
so Hypothesis~(3) is satisfied.
Further computations yields $ S_1^{n+1} + S_2^{n+1} = S_1^n+S_2^n+\frac12$.
\end{enumerate}
\bigskip

Let us now prove the second part of the proposition. A simple computation gives the sum of the two first jumps
\[\left\{\begin{array}{l} 
S_0^{n+1}+S_1^{n+1} = S_{1/2}^n+ S_{3/2}^n - \frac{1}{2}\quad \text{in case (i)}\\
S_1^{n+1}+S_2^{n+1}= S_{1/2}^n+ S_{3/2}^n + \frac{1}{2} \quad \text{in case (ii)}
\end{array}\right.\]
By \eqref{eqUndeux}, if at time $n$ we are in case (i), then at time $n+1$ we have to be in case (ii). So the sequence $(S_1^{2n}+S_2^{2n})_n$ is increasing. To prove that it tends to $+\infty$, we only have to prove that there are infinitely times $m\in\N$ such that at both times $m$ and $m+1$ we are in case (ii).

If at time $n$ we are in case (i), then at time $n+1$ we have to be in case (ii), and a simple computation leads to
\[S_1^{n+2} = S_1^n-\frac34\qquad\text{and}\qquad S_2^{n+2}  = S_2^n+\frac34,\]
in particular
\[S_1^{n+2} - S_2^{n+2} = S_1^n - S_2^n - \frac32.\]
Thus, by a straightforward induction, for any integer $k\le k_0$, where
\[k_0 = \left\lfloor \frac23\big(S_1^n - S_2^n \big)\right\rfloor,\]
we are in case (i) in time $n+2k$ and in case (ii) in time $n+2k+1$, while we are again in case (ii) in time $n+2k+2$. In other words, for any $n\in \N$, we have found a time $m>n$ such that at both times $m$ and $m+1$ we are in case (ii).

\end{proof}

\section{The non symmetric case $\lambda \neq 1/2$} \label{5conf}

Contrary to the last section, we apply the scheme with parameter $\lambda \in (0,1/2)$. The grid is shifted alternatively to the left and to the right as explained in Section~\ref{notation}. We will prove that there is an open set of initial conditions which are $5$-configurations (five jumps, four intermediate values, see Definition~\ref{def:EqHypDL}) on which the solutions converge exponentially to a $5$-configuration having a size $2$ plateau.

We first need some notations. We take $j_0=0$, denote $\varep^n = u_3^n-u_2^n$, and
\begin{align*}
u_1^\infty & = u_1^0 - \frac{2\lambda-\lambda^2}{1-4\lambda^2} \varep^0\\
u_2^\infty = u_3^\infty & = \frac{1+\lambda}{1+2\lambda}u_2^0 + \frac{\lambda}{1+2\lambda}u_3^0\\
u_4^\infty & = u_4^0 + \frac{1-\lambda^2}{1-4\lambda^2}\varep^0
\end{align*}

\begin{prop}\label{PropConvExp}
If $(u_j^0)$ is in a $5$-configuration (meaning that $(S_n^0)\in M_0^5$), and if
\begin{enumerate}[(a)]
\item\label{4conf1} $u_2^0-u_1^0 \ge 2\varep^0$;
\item\label{4conf2} $u_1^\infty \ge \lambda u_2^\infty$;
\item\label{4conf3} $\lambda(u_4^0-u_3^0)\ge (1-\lambda)\varep^0$;
\item\label{4conf4} $u_4^0 - u_3^0 \ge \lambda (1- u_3^0)$; 
\item\label{4conf6} $1- u_4^\infty \ge (1-\lambda^2)\varep^0$;
\end{enumerate}
then $(u_i^n)_i$ is in a $5$-configuration for any $n\ge 0$. Moreover, $(u_i^n)_i$ converges uniformly exponentially fast towards the configuration $(u_i^\infty)_i$.
\end{prop}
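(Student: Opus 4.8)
The plan is to study the dynamics of a $5$-configuration $(u_j^n)_j$ as an affine recursion on a finite-dimensional vector of ``shape parameters'' and to exhibit the claimed limit $(u_j^\infty)_j$ as the unique fixed point. Since a $5$-configuration has five nonzero jumps and four intermediate values $u_1^n, u_2^n, u_3^n, u_4^n$ sandwiched between the plateaus $0$ and $1$, the whole state is encoded by these four numbers. My first task is to write down, explicitly, the transition map $\Phi_\lambda$ sending $(u_1^n,u_2^n,u_3^n,u_4^n)$ to the next half-step values and then to compose two such maps (a left shift followed by a right shift) to obtain the two-step map. The reconstruction formulas are piecewise affine: on each cell one computes $d_j^n$ from \eqref{EqAlpha}, decides whether the discontinuity falls left or right of the cell center (exactly the dichotomy already exploited in Lemma~\ref{L:simple}), and then reads off the half-cell integrals. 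Conditions \eqref{4conf1}--\eqref{4conf6} are precisely the inequalities that pin down which branch of each piecewise-affine map is active, so on the region they cut out the two-step map is a single \emph{affine} map $A$ of $\R^4$.

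Once $A$ is identified as affine, the second step is to verify that the proposed $(u_j^\infty)_j$ is its fixed point. I would substitute the stated values of $u_1^\infty, u_2^\infty=u_3^\infty, u_4^\infty$ into the affine map and check $A(u^\infty)=u^\infty$; the somewhat unusual denominators $1-4\lambda^2=(1-2\lambda)(1+2\lambda)$ strongly suggest that the linear part has an eigenvalue like $\pm\lambda$ or $2\lambda$, whose resolvent produces exactly these fractions, and that $\lambda\neq 1/2$ is what keeps $1-4\lambda^2\neq 0$ so the fixed point exists. The third step is the convergence estimate: writing $e^n = u^n-u^\infty$, the recursion becomes the linear map $e^{n+2} = L\, e^n$ where $L$ is the linear part of $A$, and I would show $\|L\|<1$ in a suitable norm (equivalently, that all eigenvalues of $L$ have modulus $<1$), which gives $\|e^n\|\le C\rho^n$ with $\rho<1$, i.e. uniform exponential convergence. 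Because $\lambda\in(0,1/2)$, the natural contraction factors (powers of $\lambda$, or $2\lambda<1$) are all strictly inside the unit disk, so the spectral radius bound should come out cleanly.

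The step I expect to be the real obstacle is \emph{invariance}: proving that the conditions \eqref{4conf1}--\eqref{4conf6} propagate, so that the iterates never leave the region where $A$ is the correct affine representative. This is not automatic, because a priori the active branch of the piecewise-affine map could switch from one iteration to the next, and if it did, the simple affine analysis would break down. I would handle this by showing that the set defined by \eqref{4conf1}--\eqref{4conf6} is forward-invariant under the two-step map $A$: that is, assuming the inequalities hold at time $n$, I compute $u^{n+2}=A(u^n)$ explicitly and verify each of the five inequalities again, using both the hypotheses and the known contraction toward $u^\infty$. Concretely, several of the conditions should be stable simply because the relevant quantities relax geometrically toward their limiting values (which satisfy the inequalities with room to spare), while the more delicate ones — likely \eqref{4conf1} and \eqref{4conf4}, which control the branch choices at the two extreme cells — require a direct sign check on the updated jumps. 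Establishing this invariance is what justifies treating the nonlinear scheme as a single linear recursion, and hence what makes the exponential convergence argument legitimate.
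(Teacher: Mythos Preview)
Your overall strategy --- pin down the active branch of the piecewise-affine scheme via conditions \eqref{4conf1}--\eqref{4conf6}, compute the resulting explicit two-step map, check invariance, and deduce convergence --- is exactly what the paper does. The paper carries it out by hand: it shows that on this region the two-step update reads $u_i^{2n+2} = u_i^{2n} + c_i\,\varepsilon^{2n}$ for explicit constants $c_i$, and that $\varepsilon^{2n+2} = 4\lambda^2\varepsilon^{2n}$, whence $\varepsilon^{2n}=(4\lambda^2)^n\varepsilon^0$ and each $u_i^{2n}$ converges by summing a geometric series; the denominators $1-4\lambda^2$ you noticed arise from $\sum_{k\ge 0}(4\lambda^2)^k$.

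There is, however, a genuine error in your convergence argument. The linear part $L$ of the two-step map is \emph{not} a contraction: in the coordinates $(u_1,u_2,\varepsilon,u_4)$ with $\varepsilon=u_3-u_2$ it is upper triangular with diagonal $(1,1,4\lambda^2,1)$, so its spectral radius equals $1$. Correspondingly the fixed point is not unique --- every $5$-configuration with $\varepsilon=0$ (a size-$2$ plateau) is fixed --- and the limit $u^\infty$ genuinely depends on the initial data, as its defining formulas already show. The correct mechanism is that only the single coordinate $\varepsilon$ contracts (rate $4\lambda^2<1$), while $u_1,u_2,u_4$ drift by summable amounts $c_i\varepsilon^{2n}$; convergence comes from that summability, not from $\|L\|<1$. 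So replace your spectral-radius step by the observation $\varepsilon^{2n}=(4\lambda^2)^n\varepsilon^0$ and a telescoping sum. A smaller point: conditions \eqref{4conf1}--\eqref{4conf6} are stated in terms of $u^0$ and $u^\infty$, so they do not ``propagate'' as written; what must be verified at each step are the branch inequalities $d_j\gtrless\lambda$, and the paper does this by first proving the monotone sandwich $u_i^{2n}$ lies between $u_i^0$ and $u_i^\infty$, which reduces each branch inequality back to one of \eqref{4conf1}--\eqref{4conf6}.
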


This convergence is illustrated on Figure~\ref{F:5Config}

\begin{figure}[h!tp]
\begin{center}
 \includegraphics[width=0.7\linewidth]{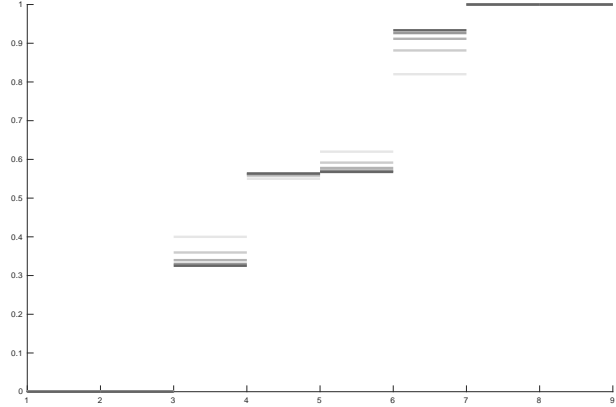}
 \caption{First iterations of the scheme for a $5$-configurations verifying the hypothesis of Proposition~\ref{PropConvExp}. Most recent iterations are of darker color.} \label{F:5Config}
\end{center}
\end{figure}

\begin{figure}[ht]
\begin{center}
\begin{tikzpicture}[scale=1]

\filldraw[color=green!50!black, fill=green!20!white] (1, 2.6) rectangle (2,1.13);
\filldraw[color=green!50!black, fill=green!20!white] (4, 3.26) rectangle (5,4.8);

\draw[very thick, color=black](-0.5, 0.5) -- (1,0.5);
\draw[very thick, color=black](5, 4.8) -- (6.5,4.8);

\draw[very thick, color=black] (1, 1.4) -- (2,1.4);
\draw[very thick, color=black] (2, 2.6) -- (4,2.6);
\draw[very thick, color=black] (4, 3.8) -- (5,3.8);

\draw[<->, color=green!30!black] (2.2,.5) -- (2.2,1.13) node[midway, right]{$\lambda$};
\draw[<->, color=green!30!black] (2.2,1.13) -- (2.2,2.6) node[midway, right]{$1-\lambda$};
\draw[<->, color=green!30!black] (5.2,2.6) -- (5.2,3.263) node[midway, right]{$\lambda$};
\draw[<->, color=green!30!black] (5.2,3.26) -- (5.2,4.8) node[midway, right]{$1-\lambda$};

\foreach \k in {0,...,5}
 {\draw[color=blue!70!black](\k,0) -- (\k,5);
  \draw[color=black](\k+.5,.2) node{$v_{\k}$};}

\draw[color=blue!70!black](6,0) -- (6,5);
\end{tikzpicture}
\caption{\label{Fig4ConfigInfini} One sets $v_0=0$, $v_5=1$ and chooses $v_2=v_3\in(0,1)$. This defines ``good'' intervals (in green) which are the upper parts of the intervals $(0,v_2)$ and $(v_3,1)$ of relative lengths $1-\lambda$; the numbers $v_1$ and $v_4$ can be chosen anywhere in these intervals.}
\end{center}
\end{figure}
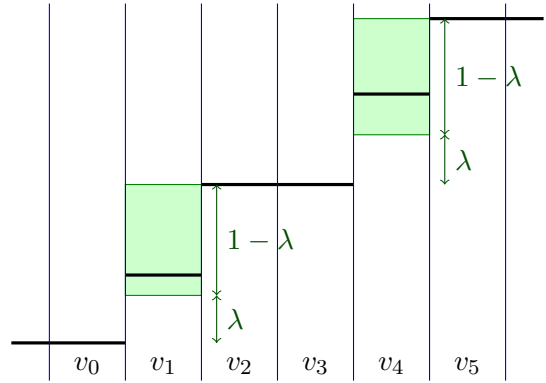

\begin{rem}
 The set of $5$-configurations satisfying \eqref{4conf1} to \eqref{4conf6} contains a nonempty open set. Indeed, consider any number $v_2=v_3\in (0,1)$, and two numbers $v_1$ and $v_4$ such that
\[\frac{v_1-0}{v_2-0}, \frac{v_4-v_2}{1-v_2} \in (\lambda,1)\]
(see Figure \ref{Fig4ConfigInfini}). In other words, consider the intervals $(0,v_2)$ and $(v_2,1)$ and divide each of them into two intervals of relative sizes $\lambda$ and $1-\lambda$; the numbers $v_1$ and $v_4$ have to be in the upper respective subintervals. One easily checks that any $5$-configuration sufficiently close to the configuration $(v_i)$ satisfies \eqref{4conf1} to \eqref{4conf6}.
\end{rem}

\begin{rem}
With a small computation one can see that condition~\eqref{4conf2} cannot hold when $\lambda=1/2$ unless $\eps^0$ is null. Thus this result is not in contradiction with Section~\ref{lambda=1/2}.
\end{rem}

\begin{proof}
We prove by induction the following properties:
\begin{itemize}
\item $(u_j^{2n})$ is in a $5$-configuration;
\item $\varep^{2n} = (4\lambda^2)^n \varep^0$;
\item and the following bounds on the intermediate values:
\begin{align*}
u_1^0 - (2\lambda-\lambda^2)\frac{1-(4\lambda^2)^{n+1}}{1-4\lambda^2} \varep^0 & \le u_1^{2n} \le u_1^0\\
u_2^0 & \le u_2^{2n} \le u_2^0 + (\lambda-2\lambda^2)\frac{1-(4\lambda^2)^{n+1}}{1-4\lambda^2} \varep^0\\
u_3^0 - (1-\lambda-2\lambda^2)\frac{1-(4\lambda^2)^{n1}}{1-4\lambda^2} \varep^0 & \le u_3^{2n} \le u_3^0\\
u_4^0 & \le u_4^{2n} \le u_4^0 + (1-\lambda^2)\frac{1-(4\lambda^2)^{n+1}}{1-4\lambda^2} \varep^0.
\end{align*}
\end{itemize}
Configurations satisfying these properties are said to satisfy property (P). In particular, this will prove that the configurations $u_j^{2n}$ are as in Figure \ref{Fig4Config}, since these conditions imply that
\begin{align*}
u_1^\infty & \le u_1^{2n} \le u_1^0\\
u_2^0 & \le u_2^{2n} \le u_2^\infty\\
u_3^\infty & \le u_3^{2n} \le u_3^0\\
u_4^0 & \le u_4^{2n} \le u_4^\infty,
\end{align*}

Suppose that a configuration $u_j^{2n}$ satisfies property (P). We want to prove that the configuration $u_j^{2n+2}$ still satisfies property (P).

For odd iteration in time, the grid is shifted to the left and the distance $d_j^{2n}=\dfrac{u_j^{2n}-u_{j-1}^{2n}}{u_{j+1}^{2n}-u_{j-1}^{2n}}$ satisfies
\[ d_j^{2n} \geq \lambda \quad \Longleftrightarrow (1-\lambda)(u_j^{2n}-u_{j-1}^{2n}) \geq \lambda (u_{j+1}^{2n}-u_{j}^{2n}). \]
Let us prove that $d_1^{2n} \ge \lambda$, $d_2^{2n} \ge \lambda$, $d_3^{2n} \le \lambda$ and $d_4^{2n} \ge \lambda$. By the hypotheses made on the initial configuration, we have respectively
\begin{itemize}
\item $(1-\lambda)(u_1^{2n}-0) \ge \lambda (u_2^{2n}- u_1^{2n})$ because by Condition \eqref{4conf2},
\[(1-\lambda) u_1^{2n} \geq (1-\lambda) u_1^\infty \geq \lambda (u_2^\infty-u_1^\infty) \geq \lambda (u_2^{2n}-u_1^{2n}).\]

\item $(1-\lambda)(u_2^{2n}-u_1^{2n}) \ge \lambda\varep^{2n}$ which is true by Condition \eqref{4conf1}:
\[ (1-\lambda) (u_2^{2n}-u_1^{2n}) \geq (1-\lambda) (u_2^{0}-u_1^{0})  \geq \lambda \eps^0 \geq \lambda \eps^{2n}.\]

\item $(1-\lambda)\varep^{2n} \le \lambda(u_4^{2n}-u_3^{2n})$ which is true by Condition \eqref{4conf3}.
\item $(1-\lambda)(u_4^{2n}-u_3^{2n}) \ge \lambda(1-u_4^{2n})$ which is true by Condition \eqref{4conf4}.
\end{itemize}

In these cases, one can compute $u_{i-\lambda}^{2n+1}$ (using in particular \eqref{EqAlpha}). It is a $5$-configuration, with
\begin{align*}
u_{1-\lambda}^{2n+1} & = u_1^{2n} - \lambda u_2^{2n}\\
u_{2-\lambda}^{2n+1} & = u_2^{2n} - \lambda \varep^{2n}\\
u_{3-\lambda}^{2n+1} & = u_2^{2n} + \lambda \varep^{2n}\\
u_{4-\lambda}^{2n+1} & = u_4^{2n} - \lambda(1-u_2^{2n}) + \varep^{2n}
\end{align*}
and in particular, 
\[\varep^{2n+1} = 2\lambda \varep^{2n}\]

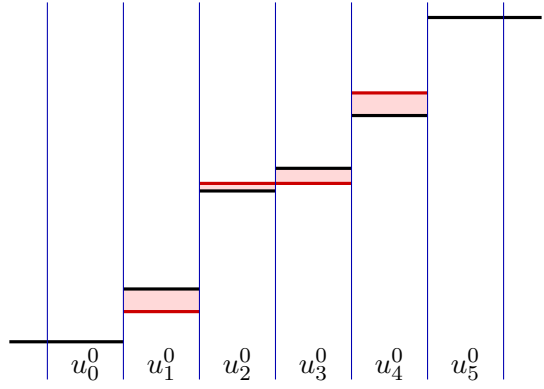
\begin{figure}[ht]
\begin{center}
\begin{tikzpicture}[scale=1]

\fill[color=red!15!white] (1, 1.2) rectangle (2,.9);
\fill[color=red!15!white] (2, 2.5) rectangle (3,2.6);
\fill[color=red!15!white] (3, 2.8) rectangle (4,2.6);
\fill[color=red!15!white] (4, 3.5) rectangle (5,3.8);

\draw[very thick, color=black](-0.5, 0.5) -- (1,0.5);
\draw[very thick, color=black](1, 1.2) -- (2,1.2);
\draw[very thick, color=black](2, 2.5) -- (3,2.5);
\draw[very thick, color=black](3, 2.8) -- (4,2.8);
\draw[very thick, color=black](4, 3.5) -- (5,3.5);
\draw[very thick, color=black](5, 4.8) -- (6.5,4.8);

\draw[very thick, color=red!80!black] (1, .9) -- (2,.9);
\draw[very thick, color=red!80!black] (2, 2.6) -- (4,2.6);
\draw[very thick, color=red!80!black] (4, 3.8) -- (5,3.8);

\foreach \k in {0,...,5}
 {\draw[color=blue!70!black](\k,0) -- (\k,5);
  \draw[color=black](\k+.5,.2) node{$u_{\k}^0$};}

\draw[color=blue!70!black](6,0) -- (6,5);
\end{tikzpicture}
\caption{\label{Fig4Config}The solutions $u_i^{2n}$ all lie in the red domains delimited by the initial configuration $u_i^0$ (black) and the limit configuration $u_i^\infty$ (red) having a 2 plateau}
\end{center}
\end{figure}

For the next iteration in time, the grid is shifted to the right, the distance $d_{j-\lambda}^{2n+1}=\dfrac{u_{j+1-\lambda}^{2n+1}-u_{j-\lambda}^{2n+1}}{u_{j+1-\lambda}^{2n+1}-u_{j-1-\lambda}^{2n+1}}$ is counted from the left interface and
\[ d_{j-\lambda}^{2n+1} \geq \lambda \quad \Longleftrightarrow (1-\lambda)(u_{j+1-\lambda}^{2n+1}-u_{j-\lambda}^{2n+1}) \geq \lambda (u_{j-\lambda}^{2n+1}-u_{j-1-\lambda}^{2n+1}). \]

Now, we have $d_{1-\lambda}^{2n+1}  \ge\lambda$, $d_{2-\lambda}^{2n+1}  \le \lambda$, $d_{3-\lambda}^{2n+1}  \ge \lambda$ and $d_{4-\lambda}^{2n+1}  \ge \lambda$, because by Hypothesis~(P), we have respectively
\begin{itemize}
\item $\lambda (u_{1-\lambda}^{2n+1}-0) \le (1-\lambda) (u_{2-\lambda}^{2n+1}-u_{1-\lambda}^{2n+1}) \iff u_2^{2n}-u_1^{2n} \ge (\lambda-\lambda^2)\varep^{2n}$ which is true by condition \eqref{4conf1};
\item $\lambda(u_{2-\lambda}^{2n+1} -u_{1-\lambda}^{2n+1}) \ge (1-\lambda)\varep^{2n+1} \iff u_2^{2n}-u_1^{2n} + \lambda u_2^{2n} \ge (2-\lambda)\varep^{2n}$ which is true by condition \eqref{4conf1};
\item $\lambda \varep^{2n+1}  \le (1-\lambda)(u_{4-\lambda}^{2n+1} -u_{3-\lambda}^{2n+1} ) \iff \lambda(1-\lambda)(1-u_4^{2n}) \le (2-4\lambda)\varep^{2n} + (1-\lambda)^2 (u_4^{2n}-u_3^{2n})$ which is true by condition \eqref{4conf4};
\item $\lambda(u_{4-\lambda}^{2n+1}- u_{3-\lambda}^{2n+1}) \le (1-\lambda)(1-u_{4-\lambda}^{2n+1})    \iff (1-\lambda^2)\varep^{2n} \le 1-u_4^{2n}$ which is true by condition \eqref{4conf6}.
\end{itemize}
These conditions allow to compute the sequence $u_j^{n+2}$:
\begin{align*}
u_1^{2n+2} & = u_1^{2n+1} + \lambda u_2^{2n+1} - (1-\lambda) \varep^{2n+1}\\
u_2^{2n+2} & = u_3^{2n+1} - \lambda \varep^{2n+1}\\
u_3^{2n+2} & = u_3^{2n+1} + \lambda \varep^{2n+1}\\
u_4^{2n+2} & = u_4^{2n+1} + \lambda(1-u_3^{2n+1})
\end{align*}
thus
\begin{align*}
u_1^{2n+2} & = u_1^{2n} - (2\lambda-\lambda^2) \varep^{2n}\\
u_2^{2n+2} & = u_2^{2n} + (\lambda-2\lambda^2) \varep^{2n} \\
u_3^{2n+2} & = u_3^{2n} - (1-\lambda-2\lambda^2) \varep^{2n}\\
u_4^{2n+2} & = u_4^{2n} + (1-\lambda^2) \varep^{2n}
\end{align*}
and in particular
\[\varep^{2n+2} = 4\lambda^2 \varep^{2n}.\]
\end{proof}

\begin{figure}[h!tp]
\begin{center}
 \includegraphics[clip=true, trim=4.5cm 0cm 4.5cm 0cm, width=1\linewidth]{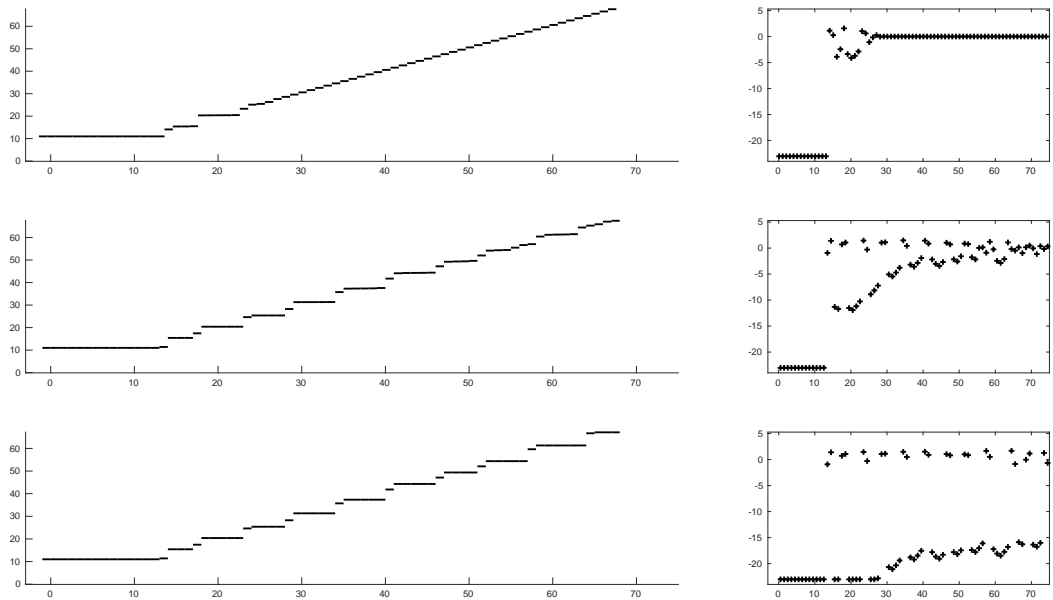}
 \caption{Left: apparition of large plateaus when the initial data is a half infinite staircase and $\lambda=0.4$. Right: height of the jumps in log-scale; we observe an exponential convergence as in Proposition~\ref{PropConvExp}} \label{F:expconv} \label{F:convexp}
\end{center}
\end{figure}

As a conclusion, let us mention that on general initial data, the sequence $(u_j^n)_{j \in \R}$ quickly goes from its initial state to some ``stairshaped'' organization.
Then, we observe an exponential convergence of the smaller jumps, as illustrated on Figure~\ref{F:convexp}.

\bibliographystyle{smfalpha}
\bibliography{biblioAntidiffusif}

\providecommand{\bysame}{\leavevmode ---\ }
\providecommand{\og}{``}
\providecommand{\fg}{''}
\providecommand{\smfandname}{\&}
\providecommand{\smfedsname}{\'eds.}
\providecommand{\smfedname}{\'ed.}
\providecommand{\smfmastersthesisname}{M\'emoire}
\providecommand{\smfphdthesisname}{Th\`ese}
\begin{thebibliography}{BCLL08}

\bibitem[BCLL08]{BCLL08}
{\scshape B.~Boutin, C.~Chalons, F.~Lagouti\`ere {\normalfont \smfandname}
  P.~G. LeFloch} -- {\og Convergent and conservative schemes for nonclassical
  solutions based on kinetic relations. {I}\fg}, \emph{Interfaces Free Bound.}
  \textbf{10} (2008), no.~3, p.~399--421.

\bibitem[Des08a]{D1}
{\scshape B.~Despr\'{e}s} -- {\og Finite volume transport schemes\fg},
  \emph{Numer. Math.} \textbf{108} (2008), no.~4, p.~529--556.

\bibitem[Des08b]{D2}
\bysame , {\og Stability of high order finite volume schemes for the 1{D}
  transport equation\fg}, in \emph{Finite volumes for complex applications
  {V}}, ISTE, London, 2008, p.~337--342.

\bibitem[DL01]{DL02}
{\scshape B.~Despr\'es {\normalfont \smfandname} F.~Lagouti\`ere} -- {\og
  Contact discontinuity capturing schemes for linear advection and compressible
  gas dynamics\fg}, \emph{J. Sci. Comput.} \textbf{16} (2001), no.~4,
  p.~479--524 (2002).

\bibitem[LeV92]{Leveque}
{\scshape R.~J. LeVeque} -- \emph{Numerical methods for conservation laws},
  second \smfedname, Lectures in Mathematics ETH Z\"{u}rich, Birkh\"{a}user
  Verlag, Basel, 1992.

\bibitem[Mil85a]{MR818833}
{\scshape J.~Milnor} -- {\og Correction and remarks: ``{O}n the concept of
  attractor''\fg}, \emph{Comm. Math. Phys.} \textbf{102} (1985), no.~3,
  p.~517--519.

\bibitem[Mil85b]{MR790735}
\bysame , {\og On the concept of attractor\fg}, \emph{Comm. Math. Phys.}
  \textbf{99} (1985), no.~2, p.~177--195.

\bibitem[Mil06]{Milnor:2006}
\bysame , {\og {A}ttractor\fg}, \emph{Scholarpedia} \textbf{1} (2006), no.~11,
  p.~1815, revision \#186525.

\end{thebibliography}

\end{document}